\newcommand{\Cc}{\mathbb{C}} 
\newcommand{\Rr}{\mathbb{R}}
\newcommand{\Nn}{\mathbb{N}}
\newcommand{\Zz}{\mathbb{Z}}
\newcommand{\Qq}{\mathbb{Q}}
\renewcommand {\le}{\leqslant}
\renewcommand {\ge}{\geqslant}
\renewcommand {\epsilon}{\varepsilon}
\newcommand{\defi}[1]{\emph{#1}}
\newcommand{\pattern}{\underline{\mathrm{m}}}
\theoremstyle{plain}
	\newtheorem{theorem}{Theorem}[section]    % theorem with number
	\newtheorem{lemma}[theorem]{Lemma}       % lemma with number
	\newtheorem{proposition}[theorem]{Proposition}      % lemma with number
	\newtheorem{corollary}[theorem]{Corollary}      % lemma with number
	\newtheorem*{theorem*}{Theorem}
\theoremstyle{remark}
	\newtheorem{example}[theorem]{Example}
	\newtheorem{exercise}[theorem]{Exercise}
\renewcommand{\pod}[1]{\allowbreak\mathchoice
	{\if@display \mkern 8mu\else \mkern 8mu\fi (#1)}
	{\if@display \mkern 8mu\else \mkern 8mu\fi (#1)}
	{\mkern4mu(#1)}
	{\mkern4mu(#1)}
}
\title{Around the gcd of the values of two polynomials}
\author{Arnaud Bodin}
\author{Christian Drouin}
\email{arnaud.bodin@univ-lille.fr}
\email{christian.drouin@wanadoo.fr}
\address{Université de Lille, CNRS, Laboratoire Paul Painlevé, 59000 Lille, France}
\address{Seignosse, France}
\subjclass[2020] {Primary 11A05; Sec. 11T06, 13P15}
\keywords{gcd, polynomial, resultant}
\date{\today}
\begin{document}

\begin{abstract}
We propose a mathematical walk around the gcd of the values $A(n)$ and $B(n)$ of two polynomials evaluated at an integer $n$. This is an opportunity to use a very powerful tool: the resultant.
\end{abstract}

\maketitle

%%%%%%%%%%%%%%%%%%%%%%%%%%%%%%%%%%%%%%%%%%%%%%%%%%%%%%%%%%%%%%%%%
\section{Motivation}

%--------------------------------
%\subsection*{The Chinese Remainder Theorem}

%A colleague owned seven shirts, one for each day of the week: on Monday a blue shirt, on Tuesday a beige shirt\ldots{} So everyone noticed that when the shirt was light red it was Friday and the weekend was approaching. During a conference, our colleague lost one of his shirts. There was no longer any simple way of associating a day with a color! Indeed, with $6$ shirts and $7$ days in the week, the same pair (day, color) is obtained every $6\times 7 = 42$ day. 

%The Chinese Remainder Theorem states that if $a$ and $b$ are coprime integers, then $\Zz/ab\Zz$ is isomorphic to $\Zz/a\Zz \times \Zz/b\Zz$. With the fundamental theorem of arithmetic this is a crucial result of arithmetic in order to decompose a problem into pieces.

%--------------------------------
\subsection*{Two polynomials}

Consider the polynomials $A(x) = x^3-5x^2+10x-12$ and $B(x) = x^2+3$. For $n \in \Zz$, let's note $G(n) = \gcd( A(n), B(n))$, which we write down as $A(n) \wedge B(n)$. Here are the values of $G(n)$ for $n$ ranging from $0$ to $30$:
{
\arraycolsep=3pt
\[
\begin{array}{*{31}{c}}
3 & 2 & 1 & 12 & 1 & 2 & 3 & 52 & 1 & 6 & 1 & 4 & 3 & 2 & 1 & 12 & 1 & 2 & 3 & 4 & 13 & 6 & 1 & 4 & 3 & 2 & 1 & 12 & 1 & 2
\end{array}
\]
}%
Even if the first values don't suggest it, the sequence of $G(n)$ is periodic, but its period can be very large. Here, the sequence $(G(n))_{n\in\Zz}$ has period $156$, its largest element also being $156$.
How can we show that such a period exists, and how can we estimate it?
We're going to break down the study of the sequence of $G(n)$ into the study of several sequences of $G(n) \wedge p^\infty$ terms. We denote by $N \wedge p^\infty$ the greatest power $p^\omega$ dividing $N$ and denote by $\nu_p(N) = \omega$ the valuation of $N$. 
The Chinese Remainder Theorem will enable us to reconstitute the set $\{G(n)\}_{n\in\Zz}$.

Let's continue with the previous example. Here are the powers of $2$ that can be extracted from $G(n)$ for the first values $n=0,1,2,\ldots$ as above:
{
\arraycolsep=3pt
\[
\begin{array}{*{31}{c}}
1 & 2 & 1 & 4 &
1 & 2 & 1 & 4 &
1 & 2 & 1 & 4 &
1 & 2 & 1 & 4 & \ldots
\end{array}
\]
}%
A periodic pattern $[1,2,1,4]$ of length $4$ is clearly visible.
This is the same phenomenon for the prime numbers $3$ and $13$:
\[
\begin{array}{ll}
p = 2 &  \pattern_2 = [1,2,1,4] \\
p = 3 & \pattern_3 = [3,1,1] \\
p = 13 & \pattern_{13} = [1,1,1,1,1,1,1,13,1,1,1,1,1]
\end{array}
\]
For all other primes, $G(n) \wedge p = 1$.
In the general case, we'll explain how to reconstruct the values of $G(n)$ from the patterns, and explain what form the patterns can take.

%--------------------------------
\subsection*{Content of the paper}

First, we will use the resultant to prove that the sequence
$(G(n))_{n\in\Zz}$  is periodic and we will explain how it decomposes into its \emph{patterns} or basic components (Theorem \ref{th:pattern}).
We will point out strong constraints on these patterns 
(Theorem \ref{th:valpresultant}) and in some situations, provide a direct formula for them; this is the case if one of the polynomials is of degree one
(Section \ref{sec:degone}) or if the polynomials decompose into a product of distinct linear factors modulo $p$ (Section \ref{sec:simpleroots}).

%--------------------------------
\subsection*{Only one polynomial}

Let's start with the case of a single polynomial $A(x) = a_d x^d + \cdots + a_1 x + a_0 \in \Zz[x]$. Since for every monomial $a_k n^k$ of $A(n)$ we have $a_k(n+p^\alpha)^k \equiv a_k n^k \pmod {p^\alpha}$, then:
\begin{lemma}
    \label{lem:periodic}
    Let $p$ be a prime number and $\alpha \in \Nn$.
    The sequence of terms $A(n) \wedge p^\alpha$ is periodic with a period dividing $p^\alpha$.
\end{lemma}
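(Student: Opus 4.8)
The plan is to show directly that $p^\alpha$ itself is a period of the sequence $n \mapsto A(n) \wedge p^\alpha$, from which the stated conclusion follows at once: every period of a periodic sequence is a multiple of its minimal period, so the minimal period divides $p^\alpha$ (and being a divisor of $p^\alpha$ it is automatically again a power of $p$, so nothing further is needed there).

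First I would establish the congruence $A(n+p^\alpha) \equiv A(n) \pmod{p^\alpha}$ for all $n \in \Zz$. Expanding each monomial by the binomial theorem, $(n+p^\alpha)^k = \sum_{j=0}^{k} \binom{k}{j} n^{k-j} p^{\alpha j}$, and every summand with $j \geq 1$ is divisible by $p^\alpha$, hence $(n+p^\alpha)^k \equiv n^k \pmod{p^\alpha}$; multiplying by $a_k$ and summing over $k$ gives $A(n+p^\alpha) \equiv A(n) \pmod{p^\alpha}$. This is exactly the observation quoted just before the statement.

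Next I would convert this additive congruence into the statement about $\wedge\, p^\alpha = \gcd(\,\cdot\,, p^\alpha)$. Writing $A(n+p^\alpha) = A(n) + m\,p^\alpha$ with $m \in \Zz$, the elementary identity $\gcd(a + cq, q) = \gcd(a, q)$, valid for all integers $a,c,q$, yields $\gcd\bigl(A(n+p^\alpha), p^\alpha\bigr) = \gcd\bigl(A(n), p^\alpha\bigr)$, that is $A(n+p^\alpha) \wedge p^\alpha = A(n) \wedge p^\alpha$. Thus the sequence is $p^\alpha$-periodic, and its minimal period divides $p^\alpha$.

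There is essentially no serious obstacle here: the whole argument is one binomial expansion followed by a single gcd manipulation. The only point deserving a moment's care is the bookkeeping around the definition $N \wedge p^\alpha = \gcd(N, p^\alpha) = p^{\min(\nu_p(N),\alpha)}$, which must be recorded so that one sees this quantity genuinely depends only on $A(n) \bmod p^\alpha$; everything else is routine.
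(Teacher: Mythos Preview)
Your argument is correct and is exactly the paper's approach: the paper's entire proof is the one-line observation, placed just before the lemma, that $a_k(n+p^\alpha)^k \equiv a_k n^k \pmod{p^\alpha}$, which is precisely your binomial step. Your added remark that $\gcd(A(n+p^\alpha),p^\alpha)=\gcd(A(n),p^\alpha)$ follows from $\gcd(a+cq,q)=\gcd(a,q)$ just makes explicit what the paper leaves implicit.
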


It is worth noting that for every non-constant polynomial $A(x) \in \Zz[x]$, there are infinitely many primes $p$, such that $p$ divides $A(n)$ for some $n\in\Zz$ (see Schur \cite{SchurPoly1912}).
Note also that, when we study $A(n)$ modulo $p^\alpha$, we obtain a function $n \mapsto A(n) \pmod{p^\alpha}$ of $\Zz/ p^\alpha \Zz$ in itself. More generally, for $m$ fixed, there are $m^m$ different functions $f: \Zz/m\Zz \to \Zz/m\Zz$, but if we count only functions $A :  \Zz/m\Zz \to \Zz/m\Zz$ induced by polynomials $A \in \Zz[x]$ there are far fewer (their number is $\prod_{k=0}^{m} \frac{m}{\gcd(m,k!)}$, which can be demonstrated using the \emph{falling factorial}, see Bhargava \cite{Bh}).
For example, if $m=4$ there are $4^4 = 256$ functions $f : \Zz/4\Zz \to \Zz/4\Zz$ but only $64$ arise from a polynomial.
For our problem, we're interested in $f(n) \wedge 4$, which can only take the values $1$, $2$ or $4$.
When we count the number of possibilities for $[f(0) \wedge 4, f(1) \wedge 4, f(2) \wedge 4, f(3) \wedge 4]$, there are in theory $3^4 = 81$ possibilities, but in fact only $25$ come from polynomial functions.

Lemma \ref{lem:periodic} implies that $G(n) \wedge p^\alpha = A(n) \wedge B(n) \wedge p^\alpha$ is also periodic, with a period dividing $p^\alpha$.
This time, however, the sequence $(G(n))_{n\in\Zz}$ is periodic. Why is this so? Thanks to the resultant!

%%%%%%%%%%%%%%%%%%%%%%%%%%%%%%%%%%%%%%%%%%%%%%%%%%%%%%%%%%%%%%%%%
\section{Prime factors of the resultant}
\label{sec:resultant}

In this section, we show that the sequence $(G(n))_{n\in\Zz}$ is periodic and explain how it is decomposed using patterns.
Let $A(x) = a_dx^d+ \cdots +a_1x +a_0$ and $B(x) = b_ex^e+ \cdots + b_1x+b_0$ be two polynomials with coefficients in a field $k$, with $a_d \neq 0$ and $b_e \neq 0$.
The \defi{resultant} $\Delta = \det(S) \in k$ is the determinant of a $(d+e)\times(d+e)$ matrix $S$, called the \defi{Sylvester matrix}:
\[
\Delta = \det \begin{pmatrix}
 a_d    &        &        &        & b_e    &        &        \\
 \vdots & a_d    &        &        & \vdots & \ddots &        \\
 a_1    & \vdots & \ddots &        & \vdots &        & b_e    \\
 a_0    & a_1    &        & a_d    & b_1    &        & \vdots   \\
        & a_0    &        & \vdots & b_0    &        & \vdots \\
        &        & \ddots & a_1    &        & \ddots & b_1 \\
        &        &        & a_0    &        &        & b_0    \\
\end{pmatrix}
\]
The first $e$ columns are formed by the coefficients of $A(x)$ (with an offset at each column; zero coefficients are not indicated), the last $d$ columns are formed by the coefficients of $B(x)$.
The resultant is used to detect whether $A(x)$ and $B(x)$ have a common root. 
It is calculated using one of the following formulas:
\begin{theorem}
    \label{th:resultant}
    Let $\alpha_1,\ldots,\alpha_d$ be the roots of $A(x)$ in $\bar{k}$.
    Let $\beta_1,\ldots,\beta_e$ be the roots of $B(x)$ in $\bar{k}$.    
    Then 
    \[ \Delta = a_d^e b_e^d \prod_{\substack{1 \le i \le d \\ 1 \le j \le e}} (\alpha_i-\beta_j) = a_d^e \prod_{1 \le i \le d} B(\alpha_i). \]
\end{theorem}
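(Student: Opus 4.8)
The plan is to establish the two equalities separately, starting from the right. The second equality $a_d^e \prod_{i} B(\alpha_i) = a_d^e b_e^d \prod_{i,j}(\alpha_i - \beta_j)$ is immediate: since $B(x) = b_e \prod_{j=1}^e (x - \beta_j)$, evaluating at $x = \alpha_i$ gives $B(\alpha_i) = b_e \prod_{j}(\alpha_i - \beta_j)$, and multiplying over $i = 1, \ldots, d$ produces the factor $b_e^d$ together with the double product. So the real content is the first equality, identifying $\det(S)$ with $a_d^e b_e^d \prod_{i,j}(\alpha_i - \beta_j)$.

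For that, I would work in the polynomial ring $R = \Zz[a_d, \ldots, a_0, b_e, \ldots, b_0]$ (or, dually, in $\Zz[\alpha_1,\ldots,\alpha_d,\beta_1,\ldots,\beta_e, a_d, b_e]$, where the coefficients $a_k$, $b_k$ are the elementary symmetric functions in the roots scaled by $a_d$, resp.\ $b_e$) and compare the two sides as polynomials. First I would treat the Sylvester matrix column-operation-free and instead use its interpretation: $\det(S)$ is, up to sign, the determinant of the linear map $(U, V) \mapsto UA + VB$ from pairs of polynomials $(U,V)$ with $\deg U < e$, $\deg V < d$ to polynomials of degree $< d+e$, written in the monomial bases. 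From this description one sees directly that $\det(S) = 0$ whenever $A$ and $B$ share a common root in $\bar k$ (the map fails to be injective), hence each $(\alpha_i - \beta_j)$ divides $\det(S)$ in the big polynomial ring; since these factors are pairwise coprime, their product divides $\det(S)$. A degree count (the resultant is homogeneous of degree $e$ in the $a$'s and $d$ in the $b$'s, matching $a_d^e b_e^d \prod_{i,j}(\alpha_i-\beta_j)$) shows the quotient is a constant, i.e.\ an element of $\Zz$ depending only on $d$ and $e$.

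It then remains to pin down that constant as $1$, which I would do by specializing to a convenient case — for instance $A(x) = a_d x^d$ (all $\alpha_i = 0$) and $B(x)$ generic, or $A(x) = a_d(x-\alpha)^d$ — where the Sylvester determinant becomes triangular or block-triangular and can be computed by hand to equal $a_d^e b_e^d \prod_{i,j}(\alpha_i - \beta_j)$ on the nose. The main obstacle is the bookkeeping in this last normalization step: one must choose the specialization carefully so that neither side degenerates to $0$ (which would leave the constant undetermined) and so that the sign coming from the permutation in $\det(S)$ is correctly accounted for. Everything else — the divisibility, the coprimality of the linear factors, the homogeneity degree count — is routine once the map-theoretic description of $\det(S)$ is in place.
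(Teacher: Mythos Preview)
The paper does not actually prove Theorem~\ref{th:resultant}: immediately after stating it, the authors write ``For this result, and the next two, we refer to an algebra book, for example to \cite[Ch.\,4,\S\,8]{Lang}.'' So there is no in-paper proof to compare against; your sketch is essentially the classical argument one finds in Lang.

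Your outline is sound, with one point worth tightening. You begin by naming the ring $R = \Zz[a_d,\ldots,a_0,b_e,\ldots,b_0]$ but then speak of $(\alpha_i-\beta_j)$ dividing $\det(S)$ ``in the big polynomial ring''; of course $(\alpha_i-\beta_j)$ does not live in $R$. The divisibility and coprimality argument only makes sense in the second ring you mention parenthetically, $\Zz[a_d,b_e,\alpha_1,\ldots,\alpha_d,\beta_1,\ldots,\beta_e]$, with the remaining coefficients $a_k,b_k$ \emph{defined} as $a_d$ (resp.\ $b_e$) times elementary symmetric functions in the roots. You should commit to that ring from the start. Once you do, the degree bookkeeping is cleanest if phrased as: $\det(S)$ has degree $e$ in each $\alpha_i$ and degree $d$ in each $\beta_j$ (this follows from multilinearity of the determinant together with the degrees of $a_k$, $b_k$ in the roots), which matches the product $\prod_{i,j}(\alpha_i-\beta_j)$ exactly and forces the quotient to lie in $\Zz[a_d,b_e]$; homogeneity in $a_d$ and $b_e$ then pins it down to an integer constant. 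Your specialization $A(x)=a_d x^d$ with $B$ generic is a good choice for computing that constant: the Sylvester matrix becomes block upper-triangular with diagonal blocks $a_d I_e$ and an upper-triangular $d\times d$ block with $b_0$ on the diagonal, giving $\det(S)=a_d^e b_0^d$, which equals $a_d^e b_e^d\prod_{i,j}(0-\beta_j)$ on the nose.
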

Here $\bar{k}$ denotes an algebraic closure of $k$, e.g.{} if $k = \Rr$ then $\bar k = \Cc$.
For this result, and the next two, we refer to an algebra book, for example to \cite[Ch.\,4,\S\,8]{Lang}.
\begin{corollary}
There exists $x_0 \in \bar{k}$ such that $A(x_0) = 0$ and $B(x_0) = 0$ if and only if $\Delta = 0$.  
\end{corollary}

Let's discuss another property of the resultant in the case of polynomials with integer coefficients: Bézout's identity.
\begin{proposition}
    \label{prop:bezout}
    For $A(x) , B(x) \in \Zz[x]$ coprime polynomials (in $\Qq[x]$), there exists $U(x), V(x) \in \Zz[x]$ such that:
    \begin{equation}
        \label{eq:bezout}
        A(x)U(x) + B(x)V(x) = \Delta
    \end{equation}
    In addition, we can assume $\deg(U) < \deg(B)$ and $\deg(V) < \deg(A)$.
\end{proposition}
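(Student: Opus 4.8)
The plan is to obtain Bézout's identity over $\Qq[x]$ first, using the extended Euclidean algorithm, and then to clear denominators in a controlled way so as to land exactly on $\Delta$. Since $A$ and $B$ are coprime in the PID $\Qq[x]$, there exist $U_0(x), V_0(x) \in \Qq[x]$ with $A(x)U_0(x) + B(x)V_0(x) = 1$; by performing the Euclidean algorithm and reducing $U_0$ modulo $B$ (adjusting $V_0$ accordingly), I may assume $\deg U_0 < \deg B = e$ and $\deg V_0 < \deg A = d$. The first point is that such $U_0, V_0$ with these degree bounds are \emph{unique}: if $(U_0', V_0')$ were another pair, then $A(U_0 - U_0') = -B(V_0 - V_0')$, and coprimality forces $B \mid (U_0 - U_0')$, impossible by the degree bound unless the difference is zero.

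Next I would set this up as a linear-algebra problem. Writing $U_0 = \sum_{i=0}^{e-1} u_i x^i$ and $V_0 = \sum_{j=0}^{d-1} v_j x^j$, the equation $AU_0 + BV_0 = 1$ becomes a system of $d+e$ linear equations in the $d+e$ unknowns $u_i, v_j$, obtained by comparing coefficients of $1, x, \ldots, x^{d+e-1}$. The matrix of this system is precisely the Sylvester matrix $S$ (this is the standard observation that multiplication-by-$A$ on $\Qq[x]_{<e}$ and multiplication-by-$B$ on $\Qq[x]_{<d}$, written in the monomial bases, assemble into $S$), and the right-hand side is the vector $(1,0,\ldots,0)^{\mathsf T}$ or $(0,\ldots,0,1)^{\mathsf T}$ depending on the ordering convention. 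Since $A, B$ are coprime, $\Delta = \det S \neq 0$, so the system has a unique solution, and by Cramer's rule each $u_i$ and $v_j$ equals $(1/\Delta)$ times a determinant whose entries are the integer coefficients $a_k, b_\ell$ together with a single $0$ or $1$ — in particular $\Delta u_i \in \Zz$ and $\Delta v_j \in \Zz$ for all $i,j$.

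Therefore $U(x) := \Delta \cdot U_0(x)$ and $V(x) := \Delta \cdot V_0(x)$ lie in $\Zz[x]$, satisfy $\deg U < e$ and $\deg V < d$, and multiplying $AU_0 + BV_0 = 1$ through by $\Delta$ gives $A(x)U(x) + B(x)V(x) = \Delta$, which is \eqref{eq:bezout}. The main obstacle — really the only substantive point — is the identification of the coefficient-comparison matrix with the Sylvester matrix and the bookkeeping of its transpose/column-order conventions; once that is in place, Cramer's rule does the integrality for free. One should also double-check the degenerate case $e = 0$ (i.e. $B$ a nonzero constant), where the statement is immediate with $U = 0$, $V = \Delta/b_0 = b_0^{d-1}$.
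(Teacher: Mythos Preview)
Your proof is correct and follows essentially the same route as the paper's. The paper also identifies the map $(U,V)\mapsto AU+BV$ on $\Qq_{e-1}[x]\times\Qq_{d-1}[x]\to\Qq_{d+e-1}[x]$ with the Sylvester matrix $S$, and then, instead of invoking Cramer's rule on the system $SW_0=E$, it uses the adjugate identity $SS^{*}=\Delta\, I$ to set $W_1:=S^{*}E\in\Zz^{d+e}$ directly, reading off $U_1,V_1\in\Zz[x]$ from its components; this is of course the same mechanism as your Cramer argument, and automatically delivers the degree bounds without needing the separate uniqueness step you included.
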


Such a Bézout's identity is first obtained on $\Qq$. Since $A(x)$ and $B(x)$ are coprime in $\Qq[x]$, there exists $U_0(x),V_0(x) \in \Qq[x]$ such that $A(x)U_0(x) + B(x)V_0(x) = 1$. Multiplying by the denominators of the coefficients of $U_0(x)$ and $V_0(x)$ gives an equation $A(x)U_1(x) + B(x)V_1(x) = r$, where $U_1(x), V_1(x) \in \Zz[x]$ and $r \in \Zz$.
We will explain in Section \ref{sec:constraints} (just after Proposition \ref{prop:resmodp}) why the resultant is one of the integers $r$ that can be obtained in this way.

\begin{corollary}
    \label{cor:factor}
    For all $n \in \Zz$, $G(n) | \Delta$.
\end{corollary}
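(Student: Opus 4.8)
The plan is to deduce the statement immediately from the Bézout identity of Proposition \ref{prop:bezout}, after disposing of a degenerate case. First I would split according to whether $A(x)$ and $B(x)$ are coprime in $\Qq[x]$. If they are \emph{not} coprime, they share a non-constant common factor in $\Qq[x]$, hence a common root in $\overline{\Qq}$, so by the Corollary to Theorem \ref{th:resultant} we have $\Delta = 0$; then $G(n) \mid 0$ holds trivially for every $n \in \Zz$, and there is nothing to prove.

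So I may assume $A(x)$ and $B(x)$ are coprime in $\Qq[x]$. By Proposition \ref{prop:bezout} there exist $U(x), V(x) \in \Zz[x]$ with
\[
A(x) U(x) + B(x) V(x) = \Delta .
\]
Evaluating this polynomial identity at $x = n$ for an arbitrary $n \in \Zz$ yields the integer equality
\[
A(n) U(n) + B(n) V(n) = \Delta ,
\]
since $U(n), V(n) \in \Zz$. Now $G(n) = A(n) \wedge B(n)$ divides both $A(n)$ and $B(n)$, hence it divides the left-hand side, and therefore $G(n) \mid \Delta$, which is the claim.

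There is essentially no obstacle here: all the content has been pushed into Proposition \ref{prop:bezout} (itself a consequence of the theory of the resultant), and the only point requiring a moment's care is that Proposition \ref{prop:bezout} carries a coprimality hypothesis — which is precisely why I treat the case $\Delta = 0$ separately at the outset rather than invoking Bézout blindly. One could instead simply remark that the claim is vacuous when $\Delta = 0$, but recording the dichotomy explicitly seems cleaner and makes clear that the coprime case is the only one with genuine arithmetic content.
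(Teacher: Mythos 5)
Your proof is correct and follows the same route as the paper: evaluate the Bézout identity of Proposition \ref{prop:bezout} at $x=n$ and note that $G(n)$ divides the left-hand side. The extra dichotomy on coprimality is harmless (the statement is vacuous when $\Delta=0$) but not needed in the paper's context, where the coprimality hypothesis is in force throughout.
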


\begin{proof}
    Thanks to this Bézout's identity, if $d | A(n)$ and $d | B(n)$ then $d | \Delta$.
\end{proof}

So the prime numbers $p$ that are factors of $G(n)$ are prime factors of the resultant $\Delta$. Of course, there are a finite number of such primes.
We'll see that the sequence $(G(n) \wedge p^\infty)_{n\in\Zz}$ is periodic.
The \defi{pattern} associated with the prime number $p$ is the list of elements of the sequence forming a minimal period:
\[ 
  \pattern_p = \left[ G(0) \wedge p^{\infty}, G(1) \wedge p^{\infty},\ldots,G(p^\mu-1) \wedge p^{\infty} \right]_{p^\mu}
\]
(The index to the right of the closing bracket indicates the length of the pattern.)
For $n\in \Zz$, we denote by $\pattern_p(n) = G(n) \wedge p^{\infty}$ the $n$-th term of the pattern extended by periodicity.

We group the first results in the following theorem.
\begin{theorem}
    \label{th:pattern}
    Let $A(x) , B(x) \in \Zz[x]$ be coprime polynomials (in $\Qq[x]$).
    	Let $G$ be defined by $G(n) = A(n) \wedge B(n)$, $n\in \Zz$.
    \begin{enumerate}
      \item The patterns are well defined: the sequence $(G(n) \wedge p^{\infty})_{n\in\Zz}$ is periodic, of a period dividing $p^{\omega_p}$ where $\omega_p = \nu_p(\Delta)$.
      
      \item \label{it:prod} For all $n \in \Zz$, $G(n) = \prod_{p | \Delta} \pattern_p(n)$.
      
      \item The sequence $( G(n) )_{n \in \Zz}$ is periodic, with a period dividing $\Delta$.
      
      \item $\{ G(n) \}_{n \in \Zz} = \prod_{p | \Delta} \{\pattern_p\} $
    \end{enumerate}
\end{theorem}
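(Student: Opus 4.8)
The four assertions are naturally proved in the order they are stated, with (1) doing most of the work and the rest following by assembling pieces. For (1), I would start from Corollary~\ref{cor:factor}, which says $G(n) \mid \Delta$ for every $n$, so $G(n) \wedge p^\infty = G(n) \wedge p^{\omega_p}$ with $\omega_p = \nu_p(\Delta)$; in other words the sequence $(G(n)\wedge p^\infty)_n$ coincides with $(A(n)\wedge B(n) \wedge p^{\omega_p})_n$. Now invoke Lemma~\ref{lem:periodic} (applied to $A$ and to $B$ with $\alpha = \omega_p$): both $A(n)\wedge p^{\omega_p}$ and $B(n)\wedge p^{\omega_p}$ are periodic of period dividing $p^{\omega_p}$, hence so is their gcd, i.e.\ so is $(G(n)\wedge p^\infty)_n$. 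This gives (1), and simultaneously shows the pattern $\pattern_p$ has length $p^{\mu}$ with $\mu \le \omega_p$.

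For (2), fix $n$ and write the prime factorisation $G(n) = \prod_{p} p^{\nu_p(G(n))}$. Only primes dividing $\Delta$ occur, again by Corollary~\ref{cor:factor}. For each such $p$ we have $p^{\nu_p(G(n))} = G(n)\wedge p^\infty = \pattern_p(n)$ by the definition of $\pattern_p(n)$ as the periodic extension of the pattern, so $G(n) = \prod_{p\mid\Delta}\pattern_p(n)$. Assertion (3) is then immediate: each factor $\pattern_p(n)$ in the product (2) is periodic in $n$ of period dividing $p^{\omega_p}$, so $G(n)$ is periodic of period dividing $\operatorname{lcm}_{p\mid\Delta} p^{\omega_p} = \prod_{p\mid\Delta} p^{\omega_p} = \Delta$ (the powers $p^{\omega_p}$ being pairwise coprime, the lcm equals the product, which is exactly $\prod_p p^{\nu_p(\Delta)} = \Delta$ up to sign).

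For (4), the inclusion $\{G(n)\}_n \subseteq \prod_{p\mid\Delta}\{\pattern_p\}$ follows directly from (2). For the reverse inclusion I would use the Chinese Remainder Theorem, exactly as announced in the Motivation section: given a choice of one value $v_p \in \{\pattern_p\}$ for each $p \mid \Delta$, pick for each $p$ an index $n_p$ with $\pattern_p(n_p) = v_p$, then use CRT to find $n$ with $n \equiv n_p \pmod{p^{\omega_p}}$ for all $p\mid\Delta$ simultaneously (possible since the moduli $p^{\omega_p}$ are pairwise coprime). By the period statement in (1), $\pattern_p(n) = \pattern_p(n_p) = v_p$ for every $p$, so $G(n) = \prod_p v_p$ by (2), giving the desired element of $\{G(n)\}_n$.

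The only genuine subtlety — the ``main obstacle'' — is making sure the periods line up correctly when applying CRT in (4): one must know that $\pattern_p(n)$ depends only on $n \bmod p^{\omega_p}$ (not merely on $n \bmod p^\mu$ for the possibly-smaller true period $p^\mu$), which is exactly what part (1) guarantees, and that the various moduli $p^{\omega_p}$ are coprime, which holds since the $p$ are distinct primes. Everything else is bookkeeping with $p$-adic valuations and the elementary fact that $\gcd$ of periodic sequences is periodic with period the lcm. I would state this last elementary fact explicitly as a one-line observation before the proof of (1), since it is used twice.
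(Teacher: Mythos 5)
Your proposal is correct and follows essentially the same route as the paper: Corollary~\ref{cor:factor} plus Lemma~\ref{lem:periodic} for (1), prime factorisation for (2), the product of pattern periods for (3), and the Chinese Remainder Theorem with moduli $p^{\omega_p}$ for (4). You even flag the same key point the paper relies on, namely that $\pattern_p(n)$ depends only on $n \bmod p^{\omega_p}$ so the CRT congruences suffice.
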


Remarks on each item:
\begin{enumerate}
  \item Recall that we noted $\omega_p = \nu_p(\Delta)$ as the $p$-valuation of the resultant, i.e.{} $p^{\omega_p}$ is the highest power of $p$ that divides $\Delta$.

  \item The second point simply states that the patterns correspond to the decomposition into prime factors of $G(n)$.

  \item Here we find a result by Frenkel--Pelik\'{a}n \cite{FP} (see also Frenkel--Z\'{a}br\'{a}di \cite{FZ}) and Bodin--Dèbes--Najib \cite{BDN20}.

  \item Let's explain how the last point differs from the second.
  The second point proves that $G(n)$ is the product of $\pattern_{p_i}(n)$, where $n$ is the same integer for each prime number $p_i$. 
  The fourth point proves that if we take any element $\pattern_{p_1}(n_1)$ of the pattern $\pattern_{p_1}$, any element $\pattern_{p_2}(n_2)$ of the pattern $\pattern_{p_2}$, \ldots{} then there exists $n\in\Zz$ such that $G(n)$ is equal to the product of $\pattern_{p_i}(n_i)$.
\end{enumerate}

\begin{example}
Let $A(x) = (x-5)(x-27)$ and $B(x) = x^2+3x+9$.
The resultant is $\Delta = \num{40131} = 3^2 \times 7^3 \times 13$.
The patterns associated with the prime factors are:
\[
\begin{array}{ll}
p = 3 &  \pattern_3 = [9,1,1,3,1,1,3,1,1]_{9} \\
p = 7 & \pattern_7 = [1, 1, 1, 1, 1, 49, 7, 1, 1, 1, 1, 1, 7, 7, 1,\ldots]_{49} \\
p = 13 & \pattern_{13} = [1,13,1,1,1,1,1,1,1,1,1,1,1]_{13}
\end{array}
\]
Here are the values of $G(n) = A(n) \wedge B(n)$ for $n=0,\ldots,30$:
{
\arraycolsep=3pt
\[
\begin{array}{*{32}{c}}
9&13&1&3&1&49&21&1&1&9&1&1&21&7&13&3&1&1&9&7&7&3&1&1&3&1&7&819&1&1&3&
\ldots
\end{array}
\]
}
The sequence $(G(n))_{n\in\Zz}$ is periodic, its period is $\num{5733} = 3^2 \times 7^2 \times 13$ (it's the product of the pattern lengths).
The set of possible values for $G(n)$ is:
\[ \big\{ 
  1, 3, 7, 9, 13, 21, 39, 49, 63, 91, 117, 147, 273, 441, 637, 819, 1911, 5733
\big\}, \]
which is exactly the product of the pattern values:
\[
    \{ \pattern_3 \} \times \{ \pattern_7 \} \times \{ \pattern_{13} \} 
  = \{1, 3, 9 \} \times \{ 1, 7 , 49 \} \times \{ 1, 13 \}.
\]
\end{example}

\begin{proof}
~
\begin{enumerate}
    \item Corollary \ref{cor:factor} proves that $G(n) | \Delta$, so $\nu_p(G(n)) \le \omega_p$ and $G(n) \wedge p^\infty = G(n) \wedge p^{\omega_p}$.
    By Lemma \ref{lem:periodic} the sequence $(G(n) \wedge p^{\omega_p})_{n\in\Zz}$ is periodic, of minimal period of the form $p^\mu$ with $\mu \le \omega_p$.
    
    \item Once again, Corollary \ref{cor:factor} proves that the prime factors of $G(n)$ are the only prime factors of the resultant.

    \item Since the patterns are periodic and there are only a finite number of prime numbers $p$ to consider then the period is smaller than the product of the periods, so a divisor of $\prod_{p | \Delta} p^{\omega_p} = \Delta$.

    \item The point \eqref{it:prod} proves inclusion $\subset$.
    For the other inclusion, we need to check that all products of pattern components are feasible.
    Let $p_i$ be a prime divisor of the resultant and let $m_i$ be an element of the pattern $\pattern_{p_i}$, $i=1,\ldots,\ell$. By definition, there exists $n_i \in \Zz$ such that $G(n_i) \wedge p^{\omega_{i}} = m_i$. By the Chinese Remainder Theorem, there exists $n \in \Zz$ such that $n \equiv n_i \pmod{p^{\omega_i}}$ for all $i=1,\ldots,\ell$.
    This integer $n$ verifies $G(n) \equiv G(n_i) \pmod{p^{\omega_i}}$, 
    and since $G(n) | \Delta$ it implies $G(n) \wedge p^{\omega_i} = m_i$, $i=1,\ldots,\ell$.
\end{enumerate}
\end{proof}

%%%%%%%%%%%%%%%%%%%%%%%%%%%%%%%%%%%%%%%%%%%%%%%%%%%%%%%%%%%%%%%%%
\section{Constraints on patterns}
\label{sec:constraints}

We will now present strong constraints on the structure of the patterns.
In this section we assume that the polynomials $A(x)$ and $B(x)$ are monic (their dominant coefficient is $1$). For each prime number $p$, we investigate the shape of the associated pattern.
Let's start with a very common case, which is a slightly more general version of a result by Frenkel--Pelik\'{a}n \cite[Theorem 6]{FP}:
\begin{proposition}
    \label{prop:valpresone}
    If $\nu_p(\Delta) = 1$ then the pattern associated with $p$ is $[p,1,1,\ldots,1]_p$ (up to permutation).
\end{proposition}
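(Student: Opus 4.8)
The plan is to reduce the claim to a statement about common roots modulo $p$, and then to count those roots using the Sylvester matrix of $A$ and $B$.

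\emph{Reduction.} Put $\omega_p=\nu_p(\Delta)=1$. Since $G(n)\mid\Delta$ by Corollary \ref{cor:factor}, we get $\nu_p(G(n))\le 1$, so $G(n)\wedge p^\infty\in\{1,p\}$, and $G(n)\wedge p^\infty=p$ exactly when $p\mid A(n)$ and $p\mid B(n)$ — a condition that depends only on $n\bmod p$. Hence $(G(n)\wedge p^\infty)_{n\in\Zz}$ has period dividing $p$, and $\pattern_p(n)=p$ if and only if the residue of $n$ is a common root of the reductions $\bar A,\bar B\in\Ff_p[x]$ (otherwise $\pattern_p(n)=1$). So it suffices to show that $\bar A$ and $\bar B$ have \emph{exactly one} common root in $\Ff_p$: then within one period $\{0,\dots,p-1\}$ exactly one term equals $p$ and the other $p-1$ equal $1$; since $p\ge 2$ the sequence is not constant while its minimal period divides $p$, so it equals $p$; and $\pattern_p$ is $[p,1,\dots,1]_p$ up to permutation.

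\emph{Main step.} Let $\bar D=\gcd(\bar A,\bar B)$ in $\Ff_p[x]$ and $\delta=\deg\bar D$. Because $A$ and $B$ are monic, reducing the Sylvester matrix $S$ modulo $p$ produces the Sylvester matrix $\bar S$ of $\bar A,\bar B$; since $p\mid\Delta$ we have $\det\bar S=\overline\Delta=0$, so $\bar A$ and $\bar B$ share a root in $\overline{\Ff_p}$ (root characterization of the resultant, i.e.\ the corollary of Theorem \ref{th:resultant} applied over the field $\Ff_p$), whence $\delta\ge 1$. The crux is to prove
\[
  \nu_p(\Delta)\ \ge\ \delta .
\]
For this I would observe that the $\Ff_p$-linear map $(U,V)\mapsto \bar A\,U+\bar B\,V$, from pairs with $\deg U<e$ and $\deg V<d$ to polynomials of degree $<d+e$, is represented by $\bar S$ in the monomial bases and has image the set of multiples of $\bar D$ of degree $<d+e$; hence $\operatorname{rank}\bar S=(d+e)-\delta$ and $\bar S$ has nullity $\delta$. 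Passing to a Smith normal form $S=P\operatorname{diag}(s_1,\dots,s_{d+e})Q$ with $P,Q\in GL_{d+e}(\Zz)$ and $s_1\mid\cdots\mid s_{d+e}$, the rank of $\bar S$ equals the number of $s_i$ prime to $p$, so exactly $\delta$ of the $s_i$ are divisible by $p$ and $\nu_p(\Delta)=\sum_i\nu_p(s_i)\ge\delta$. Combined with $\delta\ge 1$ and $\nu_p(\Delta)=1$ this gives $\delta=1$, hence $\bar D=x-r$ for a single $r\in\{0,\dots,p-1\}$, the unique common root of $\bar A,\bar B$ in $\Ff_p$. Then $p\mid G(n)$ iff $n\equiv r\pmod p$, in which case $1\le\nu_p(G(n))\le\nu_p(\Delta)=1$ forces $G(n)\wedge p^\infty=p$, and otherwise $G(n)\wedge p^\infty=1$ — exactly the reduced statement.

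I expect the inequality $\nu_p(\Delta)\ge\delta$ to be the only real obstacle, as it is what links the exact power of $p$ dividing the resultant to the degree of the common factor of $A$ and $B$ modulo $p$. Working instead from the formula $\Delta=\prod_iB(\alpha_i)$ and $p$-adic valuations of the differences $\alpha_i-\beta_j$ gives only a weaker bound, because ramification can spread a valuation-$1$ resultant over several small fractional contributions; so I would rely on the elementary-divisor argument for the Sylvester matrix rather than on the root formula.
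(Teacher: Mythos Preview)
Your argument is correct. It parallels the paper's proof in that the heart of the matter is the inequality $\deg\gcd(\bar A,\bar B)\le\nu_p(\Delta)$ (which the paper isolates as Proposition~\ref{prop:resmodp}), combined with $G(n)\mid\Delta$ to cap the $p$-valuation at~$1$.

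Two differences are worth noting. First, you prove the key inequality via the Smith normal form of the Sylvester matrix: computing the kernel of $\bar S$ over $\Ff_p$ gives nullity $\delta=\deg\bar D$, and then at least $\delta$ of the elementary divisors of $S$ must be multiples of $p$, so $\nu_p(\Delta)=\sum_i\nu_p(s_i)\ge\delta$. The paper instead constructs an explicit integer matrix $W$ with $\det W=1$ (its first $\delta$ columns encode the pairs $(x^iB_0,-x^iA_0)$ coming from the factorization $\bar A=\bar D A_0$, $\bar B=\bar D B_0$) such that the first $\delta$ columns of $SW$ are divisible by $p$, forcing $p^\delta\mid\det S$. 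Your route is more conceptual and reusable; the paper's is self-contained and avoids the Smith normal form. Second, once you know $\bar D=x-r$ is linear you read off directly that there is exactly one common root modulo $p$; the paper instead appeals to Theorem~\ref{th:valpresultant} (its Vandermonde-type inequality $\nu_p(n_2-n_1)\ge\omega_1+\omega_2-\nu_p(\Delta)$) to force $n_2\equiv n_1\pmod p$, which is heavier than necessary here but showcases that theorem.
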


Proposition \ref{prop:valpresone} is a consequence of a more general result that imposes many constraints on the patterns that can be realized.

\begin{theorem}
    \label{th:valpresultant}
      Let $A(x) , B(x) \in \Zz[x]$ be coprime monic polynomials, let $\Delta$ be their resultant and let $G$ be defined by $G(n) = A(n) \wedge B(n)$, $n\in \Zz$.    
    If an integer $q_i$ divides $G(n_i)$ for $i=1,\ldots,\ell$, with $\ell \le \deg(A)+\deg(B)$, then $q_1 q_2 \cdots q_\ell$ divides
    $\Delta \times \prod_{1 \le i < j \le \ell} (n_j-n_i)$.
    In particular, if $p^{\omega_1}$ divides $G(n_1)$ and $p^{\omega_2}$ divides $G(n_2)$ then $$\nu_p(n_2-n_1) \ge \omega_1+\omega_2 - \nu_p(\Delta).$$
\end{theorem}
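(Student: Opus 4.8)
The plan is to produce a single Bézout-type identity that simultaneously "sees" all the points $n_1,\dots,n_\ell$, and then read off divisibility by taking a suitable determinant. First I would recall that since $A$ and $B$ are coprime monic polynomials with resultant $\Delta$, Proposition \ref{prop:bezout} gives $U(x),V(x)\in\Zz[x]$ with $A(x)U(x)+B(x)V(x)=\Delta$ and $\deg U<e=\deg B$, $\deg V<d=\deg A$. The key observation is that for each index $i$, the integer $G(n_i)=A(n_i)\wedge B(n_i)$ divides both $A(n_i)$ and $B(n_i)$; in particular $q_i\mid A(n_i)$ and $q_i\mid B(n_i)$. I want to exploit that $A$ and $B$ each vanish modulo $q_i$ at $x=n_i$, for $\ell\le d+e$ different constraints, and that a polynomial of degree $<d+e$ is overdetermined by $d+e$ such conditions — the obstruction to interpolation is exactly a Vandermonde determinant $\prod_{i<j}(n_j-n_i)$, which is where that product enters.

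Concretely, I would consider the $\Zz$-module of pairs $(R(x),S(x))$ with $\deg R<e$, $\deg S<d$, i.e. the $(d+e)$-dimensional lattice on the monomial basis $1,x,\dots,x^{e-1}$ (for $R$) and $1,x,\dots,x^{d-1}$ (for $S$), and the map sending $(R,S)\mapsto A(x)R(x)+B(x)S(x)$, a polynomial of degree $<d+e$. On the monomial bases this map is represented precisely by the Sylvester matrix $S$, whose determinant is $\Delta$. Now evaluate the resulting polynomial $A R+BS$ at the points $n_1,\dots,n_\ell$: this composite linear map $\Zz^{d+e}\to\Zz^\ell$, $(R,S)\mapsto (A(n_i)R(n_i)+B(n_i)S(n_i))_i$, factors as (Sylvester matrix) followed by (evaluation-at-$n_i$ of coefficient vectors), the latter being (a submatrix of) a Vandermonde matrix. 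Completing $\ell$ to $d+e$ by padding with extra rows, the determinant of the evaluation matrix is $\pm\prod_{i<j}(n_j-n_i)$ times a factor coming from the padding, so the image of this composite map in $\Zz^\ell$ is contained in a sublattice of index dividing $\Delta\cdot\prod_{i<j}(n_j-n_i)$. On the other hand, since $q_i\mid A(n_i)$ and $q_i\mid B(n_i)$, every vector in the image has $i$-th coordinate divisible by $q_i$; but the Bézout identity says the image also contains the vector all of whose entries equal $\Delta$ — no wait, more carefully: I would instead argue that the image lattice, being of index dividing $\Delta\prod_{i<j}(n_j-n_i)$ in $\Zz^\ell$, must contain an element with $i$-th coordinate a unit times something, forcing $\prod q_i\mid \Delta\prod_{i<j}(n_j-n_i)$ via the elementary-divisor/Smith-normal-form comparison of the two lattices $\bigoplus q_i\Zz$ and the image.

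Let me restate the clean version I would actually write: the matrix $M$ with rows indexed by $i=1,\dots,\ell$ and columns by the $d+e$ basis pairs, $M_{i,\bullet}=(n_i^0,\dots,n_i^{e-1},n_i^0,\dots,n_i^{d-1})$, composed with $S$, has all its $i$-th row entries — after multiplication — landing in $q_i\Zz$ once we remember $q_i\mid A(n_i),\ q_i\mid B(n_i)$; since $\det$ of any $\ell\times\ell$ minor of $M S$ lies in $\bigl(\prod q_i\bigr)\Zz$ while also $MS$ has entries whose minors are controlled by $\det S=\Delta$ and the Vandermonde minors of $M$ (by Cauchy–Binet), taking one full-rank $\ell\times\ell$ minor gives $\prod q_i \mid \Delta\cdot\sum(\text{Vandermonde }\ell\times\ell\text{ minors})$, each of which is divisible by $\prod_{i<j}(n_j-n_i)$, yielding the claim. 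The particular case follows by taking $\ell=2$, $q_i=p^{\omega_i}$: then $p^{\omega_1+\omega_2}\mid \Delta(n_2-n_1)$, i.e. $\nu_p(n_2-n_1)\ge \omega_1+\omega_2-\nu_p(\Delta)$.

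The main obstacle I expect is making the Cauchy–Binet bookkeeping honest: one must check that every $\ell\times\ell$ minor of the Vandermonde-type matrix $M$ (with its repeated block structure $1,\dots,x^{e-1}\,|\,1,\dots,x^{d-1}$, not a plain Vandermonde) is divisible by $\prod_{1\le i<j\le\ell}(n_j-n_i)$ — this is true because each such minor, viewed as a polynomial in $n_1,\dots,n_\ell$, is alternating, hence divisible by the Vandermonde $\prod_{i<j}(n_j-n_i)$ — and that the hypothesis $\ell\le d+e$ is exactly what guarantees $M$ can have rank $\ell$ so that a nonzero minor exists to run the argument. The rest is routine linear algebra over $\Zz$.
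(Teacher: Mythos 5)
Your overall strategy --- compose the Sylvester matrix with evaluation rows at $n_1,\dots,n_\ell$, observe that the $i$-th row of the product is divisible by $q_i$, and take a determinant --- is exactly the paper's. But the execution has a genuine gap at the decisive step. Working with a rectangular $\ell\times(d+e)$ matrix $M$ and applying Cauchy--Binet to an $\ell\times\ell$ minor of $MS$ gives
\[
\det\bigl((MS)_{[\ell],J}\bigr)=\sum_{K}\det\bigl(M_{[\ell],K}\bigr)\det\bigl(S_{K,J}\bigr),
\]
where $K$ runs over $\ell$-element column sets; the factors $\det(S_{K,J})$ are arbitrary $\ell\times\ell$ minors of the Sylvester matrix, not $\Delta$, so this shows only that $q_1\cdots q_\ell$ divides $\prod_{i<j}(n_j-n_i)$ times some unidentified integer. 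The desired conclusion $q_1\cdots q_\ell\mid\Delta\prod_{i<j}(n_j-n_i)$ does not follow from this. (There is also a bookkeeping slip: the evaluation row must be $X_i=(n_i^{d+e-1},\dots,n_i,1)$, indexed by the codomain basis, so that $X_iS=\bigl(n_i^{e-1}A(n_i),\dots,A(n_i),n_i^{d-1}B(n_i),\dots,B(n_i)\bigr)$; the repeated-block rows $(n_i^0,\dots,n_i^{e-1},n_i^0,\dots,n_i^{d-1})$ you wrote are indexed by the domain basis, and with them $MS$ would not compute the evaluations you need.)

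The fix is precisely the step your sketch circles around but never lands on: make the evaluation matrix square. The paper completes the $\ell$ rows $X_1,\dots,X_\ell$ to a $(d+e)\times(d+e)$ matrix $V$ by appending $d+e-\ell$ standard basis rows supported on the first $d+e-\ell$ columns; expanding along those rows leaves an honest $\ell\times\ell$ Vandermonde, so $\det V=\pm\prod_{i<j}(n_j-n_i)$ and hence $\det(VS)=\det V\cdot\det S=\pm\,\Delta\prod_{i<j}(n_j-n_i)$ exactly, with no Cauchy--Binet needed. Meanwhile the first $\ell$ rows of $VS$ are the evaluation rows, divisible by $q_1,\dots,q_\ell$ respectively, and the remaining rows are integer rows of $S$; factoring $q_i$ out of row $i$ gives the claim at once. (Equivalently, your minors of $MS$ would suffice once you note that $\pm\Delta\prod_{i<j}(n_j-n_i)=\det(VS)$ is an integer linear combination of them via Laplace expansion along the first $\ell$ rows --- but that is the padding argument in disguise.) Your deduction of the special case by taking $\ell=2$ and $q_i=p^{\omega_i}$ is fine.
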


\begin{example}
Consider $p=5$ and $\nu_p(\Delta)=2$, then the possible patterns are:
\begin{center}
\begin{tabular}{l}
$[1]_1$ \quad \\
$[5,1,1,1,1]_5$ \quad up to permutation \\
$[5,5,1,1,1]_5$ \quad up to permutation \\
$[25,1,1,1,1,5,1,1,1,1,5,1,1,1,\ldots]_{25}$ \quad up to circular permutation \\
\end{tabular}
\end{center}

We leave it to the reader to find monic polynomials that realize these patterns!

% [[The solutions are here, in comments in the file! ]]
% pattern [1]
% P = x^2+2
% Q = x^2+7
%
% pattern [5,1,1,1,1] 
% P = x*(x-5)
% Q = x^2-10
% 
% pattern [5,5,1,1,1]
% P = x*(x-1)
% Q = (x-5)*(x-6)
% 
% pattern [25,1,1,1,1,5,1,1,1,1,5,1,1,...]
% P = x
% Q = x-25

Theorem \ref{th:valpresultant} shows that the other patterns are not realized.
Since $\nu_5(\Delta)=2$ then the only elements making up the pattern are $1$, $5$ or $25$. 
For example, the pattern $[25,1,1,1,1]_5$ cannot be realized.
Indeed, if $5^2 | G(n_1)$ and $5^2 | G(n_2)$, Theorem \ref{th:valpresultant} with $\omega_1=\omega_2=2$ implies that $\nu_5(n_2-n_1) \ge 2$, hence $n_2 \equiv n_1 \pmod{25}$ and prevents the pattern in question from being realized.
The pattern $[25,5,1,1,1,5,5,1,1,1,5,5,1,1,\ldots]_{25}$ is similarly excluded by setting $\omega_1=2$ and $\omega_2=1$.
More generally $5$ consecutive elements never include both $25$ and $5$.

It's also easy to show that out of $5$ consecutive elements of a pattern, at most two are divisible by $5$.
If $\deg(A)=1$ and $\deg(B)=1$ then $A(x)=x-r_0$ and $B(x)=x-s_0$ have at most one common root modulo $5$. In the case $\deg(A) > 1$ or $\deg(B)>1$
and if $5 | G(n_i)$, $i=1,2,3$, then Theorem \ref{th:valpresultant} gives the inequality
$$\nu_5\big( (n_2-n_1)(n_3-n_1)(n_3-n_2) \big) \ge 1,$$
so $\nu_5(n_j-n_i) \ge 1$ for a certain pair $(i,j)$, so for instance $n_2 \equiv n_1 \pmod{5}$.
This excludes patterns $[5,5,5,1,1]_5$ or $[5]_1$, for example.
\end{example}

\begin{proof}[Proof of Theorem \ref{th:valpresultant}]
    Consider the row vector $X = (n^{d+e-1}, \ldots, n^2,n,1)$.
    Multiply $X$ to the right of the Sylvester matrix $S$, then
    $$X \times S = \big(n^{e-1}A(n), \ldots, nA(n), A(n), n^{d-1}B(n),\ldots,nB(n),B(n) \big)$$
    Consider the matrix $V$:
    $$V = 
    \begin{pmatrix}
    n_1^{d+e-1} & \cdots & \cdots & n_1^2 & n_1 & 1 \\
    n_2^{d+e-1} & \cdots & \cdots & n_2^2 & n_2 & 1 \\   
    \vdots & & & & & \\
    n_l^{d+e-1} & \cdots & \cdots & n_l^2 & n_l & 1 \\      
    1 & 0 & \cdots & & & \\
    0 & 1 & 0 & \cdots & & \\
    \cdots & & & & & \\
    \end{pmatrix}$$
    $V$ is of size $(d+e)\times(d+e)$,
    the first $\ell$ rows are of the form $(n_i^{d+e-1}, \ldots, n_i^2,n_i,1)$.
    The following rows contain a single $1$ and form an identity sub-matrix at bottom left.
    \begin{itemize}
        \item The determinant of $V$ is calculated as a Vandermonde determinant of size $\ell \times \ell$:
        \[ \det V = \pm \prod_{1 \le i < j \le \ell} (n_j-n_i). \]

        \item By definition $\det S = \Delta$.

        \item The first $\ell$ rows of $V \times S$, are of the form 
        \[\big(n_i^{e-1}A(n_i), \ldots, n_iA(n_i), A(n_i), n_i^{d-1}B(n_i),\ldots,n_iB(n_i),B(n_i) \big) \]
        as explained above.
        Thus, if $q_i$ divides $G(n_i)$, then $q_i$ divides $A(n_i)$ and $B(n_i)$ so $q_i$ divides all the elements in row $i$ of the matrix $V \times S$, for $i=1,\ldots,\ell$. So $q_1q_2\cdots q_\ell$ is a factor of $\det(V \times S)$.
        This proves that $q_1q_2\cdots q_\ell$ divides $\Delta \times \prod_{1 \le i < j \le \ell} (n_j-n_i)$.
    \end{itemize}

\end{proof}

We need a very useful result by Gomez--Gutierrez \cite{GG} (see also the proof of \cite[Theorem 6]{FP}) which provides an inequality between the degree of the gcd of two polynomials modulo $p$ and the valuation in $p$ of the resultant of these two polynomials. 
\begin{proposition}
    \label{prop:resmodp}
    Let $A(x), B(x) \in \Zz[x]$ be monic polynomials.
    Let $D(x)$ be the gcd of $A(x)$ and $B(x)$ modulo $p$.
    Then $\deg(D) \le \nu_p(\Delta)$.
\end{proposition}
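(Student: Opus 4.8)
The plan is to work directly with the Sylvester matrix $S$ from Section~\ref{sec:resultant}, reduce it modulo $p$, and play off two facts against each other: the rank of $S$ over $\Ff_p$ is smaller than its rank over $\Qq$ by at least $\deg(D)$, while this rank deficiency is at most $\nu_p(\det S)=\nu_p(\Delta)$. In other words, $\deg(D)$ and $\nu_p(\Delta)$ get trapped by the same rank deficiency.

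Concretely, recall that $S$ is, up to the choice of monomial bases, the matrix of the $\Zz$-linear map $\phi\colon\Zz[x]_{<e}\oplus\Zz[x]_{<d}\to\Zz[x]_{<d+e}$, $(U,V)\mapsto UA+VB$ (here $\Zz[x]_{<m}$ denotes the polynomials of degree $<m$): its columns are exactly the coefficient vectors of $x^{e-1}A,\dots,A,x^{d-1}B,\dots,B$. We may assume $\Delta\neq0$, otherwise $\nu_p(\Delta)=+\infty$ and there is nothing to prove. Reducing mod $p$, every column of $\bar S:=S\bmod p$ is the coefficient vector of a polynomial of degree $<d+e$ that is divisible by $D=\gcd(\bar A,\bar B)$, since $D$ divides both $\bar A$ and $\bar B$ in $\Ff_p[x]$; such polynomials form an $\Ff_p$-subspace of $\Ff_p[x]_{<d+e}$ of dimension $d+e-\deg(D)$ (divide by $D$). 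Hence
\[
\operatorname{rank}_{\Ff_p}(\bar S)\ \le\ d+e-\deg(D),\qquad\text{equivalently}\qquad (d+e)-\operatorname{rank}_{\Ff_p}(\bar S)\ \ge\ \deg(D).
\]

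Then I would invoke the Smith normal form $S=P\,\mathrm{diag}(d_1,\dots,d_{d+e})\,Q$ with $P,Q\in\mathrm{GL}_{d+e}(\Zz)$. Since $P,Q$ remain invertible modulo $p$, $\operatorname{rank}_{\Ff_p}(\bar S)=\#\{i:p\nmid d_i\}$, so the rank deficiency equals $\#\{i:p\mid d_i\}\le\sum_i\nu_p(d_i)=\nu_p\big(\textstyle\prod_i d_i\big)=\nu_p(|\det S|)=\nu_p(\Delta)$. Combined with the previous inequality this yields $\deg(D)\le\nu_p(\Delta)$. (Equivalently: the cokernel $C$ of $\phi$ is a finite abelian group with $|C|=|\Delta|$ and $C/pC=\operatorname{coker}(\bar S)$, so $\deg(D)\le\dim_{\Ff_p}(C/pC)=\dim_{\Ff_p}(C_p/pC_p)\le\nu_p(|C_p|)=\nu_p(\Delta)$, where $C_p$ is the $p$-primary part of $C$.)

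The only genuinely external ingredient is the existence of the Smith normal form over $\Zz$ (equivalently, the structure theorem for finite abelian groups); everything else is bookkeeping. The place where I expect the write-up to need care rather than ingenuity is the identification of $S$ with the coefficient matrix of $U\mapsto UA$ and $V\mapsto VB$ — this is precisely what makes ``every column of $\bar S$ is divisible by $D$'' evident — together with keeping the two rank estimates pointing in the right directions. The monic hypothesis is used only to guarantee that $\bar A$ and $\bar B$ are nonzero, so that $\gcd(\bar A,\bar B)$ has a well-defined degree; the proof uses nothing else specific to $A$ and $B$.
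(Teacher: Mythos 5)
Your proof is correct, but it goes by a genuinely different route than the paper's. Both arguments rest on the same identification of $S$ with the matrix of $(U,V)\mapsto AU+BV$, but they exploit it on opposite sides. The paper bounds the \emph{kernel} of $S$ modulo $p$ from below: writing $A\equiv DA_{0}$ and $B\equiv DB_{0}\pmod{p}$, it exhibits the $\ell=\deg(D)$ explicit syzygies $(x^{i}B_{0},-x^{i}A_{0})$ for $0\le i\le \ell-1$, completes the corresponding integer vectors to a square matrix $W$ with $\det(W)=1$ (this is exactly where monicness is used, to place $1$'s in the pivot positions), and reads off $p^{\ell}\mid\det(SW)=\Delta$ by determinant multiplicativity alone. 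You bound the \emph{image} instead: every column of $\bar{S}$ is the coefficient vector of a multiple of $D$ of degree $<d+e$, so $\operatorname{rank}_{\Ff_{p}}(\bar{S})\le d+e-\deg(D)$, and you convert this rank deficiency into a power of $p$ dividing $\Delta$ via the Smith normal form. The trade-off is clear: the paper's argument is self-contained (no tool beyond multiplicativity of the determinant) but needs the monic hypothesis to make $W$ unimodular, whereas yours imports a heavier external ingredient (Smith normal form, equivalently the structure theorem for finitely generated abelian groups) but in exchange the inequality ``rank deficiency of $\bar{S}$ is at most $\nu_{p}(\det S)$'' is a general fact about integer matrices, so your argument applies verbatim to non-monic $A$, $B$ whose reductions modulo $p$ are nonzero --- your closing remark about where monicness enters is accurate. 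Your reduction to the case $\Delta\neq0$ at the outset is also genuinely needed and correctly handled, since the Smith normal form step uses that all invariant factors are nonzero.
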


We're going to describe a simple proof, which will provide an opportunity to present the resultant via linear maps. 
Let $k_n[x]$ be the vector space of polynomials of degree $\le n$, and let choose for this vector space of dimension $n+1$ the (reverse) canonical basis $(x^n,x^{n-1},\ldots,x,1)$.
Let $A(x) \in k_d[x]$ and $B(x) \in k_e[x]$ (not necessarily monic).
Consider the linear map $\varphi$ defined by:
\[ \begin{array}{rccc}
\varphi : & k_{e-1}[x] \times k_{d-1}[x] & \longrightarrow & k_{e+d-1}[x] \\
          &                            (U,V) & \longmapsto     & AU+BV \\
\end{array}
\]

The matrix of $\varphi$ associated with the canonical bases is exactly the Sylvester matrix $S$ of the polynomials $A(x)$ and $B(x)$.
If $A(x)$ and $B(x)$ are coprime, then there exist $U(x) \in k_{e-1}[x]$ and $V(x) \in k_{d-1}[x]$ such that $A(x)U(x)+B(x)V(x)=1$, which implies that $\varphi$ is surjective, so by dimensional reasons, bijective, and so in this case $\Delta = \det(S) \neq 0$.
If $A(x)$ and $B(x)$ are not coprime, then there exists $D(x) \in k[x]$ such that $A = DA_0$, $B=DB_0$ with $\deg(A_0) < \deg(A)$ and $\deg(B_0) < \deg(B)$. The relation $AB_0 - BA_0 = 0$, proves that $\varphi(B_0,-A_0) = 0$, so $\varphi$ is not injective and in this case $\Delta = 0$.

\medskip

We are going to use these considerations of linear algebra to prove Proposition \ref{prop:resmodp}, but first we complete our explanations on Proposition \ref{prop:bezout}.
Let $A(x), B(x) \in \Zz[x]$ be coprime polynomials (in $\Qq[x]$), by Bézout's identity there exist $U_0(x),V_0(x) \in \Qq[x]$ such that $A(x)U_0(x) + B(x)V_0(x) = 1$, which implies $\varphi(U_0,V_0) =1$. So that in terms of matrices $S \times W_0 = E$, where $W_0$ is the column vector associated with $(U_0,V_0)$ and $E$ is the column vector $(0,0,\ldots,0,1)$. 
We now explain how to find polynomials $U_1(x),V_1(x)$ with integer coefficients, such that $A(x)U_1(x) + B(x)V_1(x) = \Delta$ that is to say $\varphi(U_1,V_1) = \Delta$.
The inverse $S^{-1}$ of $S$ can be computed by $S^{-1} = \frac{1}{\det(S)} S^*$ where $S^*$ denotes the transpose of the cofactor matrix of $S$.
Then $\det(S)\,I=SS^{*}$ ($I$ being the identity matrix) and $\det(S)\,E=SS^{*}E$ ($E$ being the column vector $(0,0,\ldots,0,1)$).
Let $W_1 = S^{*}E$, this is a column vector with integer coefficients. 
We denote by $U_1(x) \in \Zz[x]$ (resp.~$V_1(x) \in \Zz[x]$) the polynomial whose coefficients are the $e$ first (resp.~$d$ last) components of $W_1$.
As $SW_1 = \Delta E$, we get $\varphi(U_1,V_1) = \Delta$ so that $A(x)U_1(x) + B(x)V_1(x) = \Delta$.

% Via comatrix
% Let $A(x), B(x) \in \Zz[x]$ be coprime polynomials (in $\Qq[x]$) and let $U(x),V(x) \in \Qq[x]$ be such that $A(x)U(x) + B(x)V(x) = 1$. It implies $\varphi(U,V) =1$, so that in terms of matrices $S \times W = E$, where $W$ is the column vector associated with $(U,V)$ and $E$ is the column vector $(0,0,\ldots,0,1)$. The inverse $S^{-1}$ of $S$ can be computed by $S^{-1} = \frac{1}{\det(S)} S^*$ where $S^*$ denote the transpose of the cofactor matrix of $S$. Then $\det(S) W = S^* E$, which implies that $\Delta w_i \in \Zz$, where $w_i$ is a coefficient of $W$ and $\Delta = \det(S)$. As $w_i$ denotes a coefficients of $U(x)$ or of $V(x)$, it implies that $U_1(x) = \Delta U(x)$ and $V_1(x) =\Delta V(x)$ are polynomials with integer coefficients satisfying $A(x)U_1(x) + B(x)V_1(x) = \Delta$.

% Via Cramer
%Let $A(x), B(x) \in \Zz[x]$ be coprime polynomials (in $\Qq[x]$) and let $U(x),V(x) \in \Qq[x]$ be such that $A(x)U(x) + B(x)V(x) = 1$. It implies $\varphi(U,V) =1$, so that in terms of matrices $S \times W = E$, where $W$ is the column vector associated with $(U,V)$ and $E$ is the column vector $(0,0,\ldots,0,1)$. By Cramer's rule, a coefficient $w_i$ of $W$ can be computed by $w_i = \det(S_i)/\det(S)$, where $S_i$ is the matrix $S$ with its $i$-th column replaced by $E$. As $\Delta = \det(S)$ and $\det(S_i)$ are integers, and $w_i$ is a coefficients of $U(x)$ or of $V(x)$, then $\Delta w_i \in \Zz$. It implies that $U_1(x) = \Delta U(x)$ and $V_1(x) =\Delta V(x)$ are polynomials with integer coefficients satisfying $A(x)U_1(x) + B(x)V_1(x) = \Delta$.

\begin{proof}[Proof of Proposition \ref{prop:resmodp}]
    Let $D(x)$ be the gcd of $A(x)$ and $B(x)$ modulo $p$, i.e.{} we can write $A(x) \equiv D(x) A_0(x) \pmod{p}$ and $B(x) \equiv D(x) B_0(x) \pmod{p}$ with $A_0(x)$ and $B_0(x)$ monic polynomials, with integer coefficients, with no common factors modulo $p$. Let $\ell = \deg(D) \ge 1$. 
    Let $A_0(x) = \alpha_{d-\ell}x^{d-\ell} + \cdots +\alpha_0$, $B_0(x) = \beta_{e-\ell}x^{e-\ell}+\cdots+\beta_0$. Denote by $W_0$ the vector corresponding to the pair of polynomials $(B_0(x),-A_0(x)) \in \Rr_{e-1}[x] \times \Rr_{d-1}[x]$, and more generally $W_i$ the vector corresponding to the pair of polynomials $(x^i B_0(x),-x^i A_0(x)) \in \Rr_{e-1}[x] \times \Rr_{d-1}[x]$, for $i=0,\ldots,\ell-1$:
    \[
    W_0 = \left(\begin{smallmatrix} 
    0 \\ \vdots \\ 0 \\ \beta_{e-\ell} \\ \vdots \\ \beta_0 \\ 
    0 \\ \vdots \\ 0 \\ -\alpha_{d-\ell} \\ \vdots \\ -\alpha_0 
    \end{smallmatrix}\right)
    \qquad \cdots \qquad
    W_{\ell-1} = \left(\begin{smallmatrix} 
    \beta_{e-\ell} \\ \vdots \\ \beta_0 \\ 0 \\ \vdots \\ 0 \\ 
    -\alpha_{d-\ell} \\ \vdots \\ -\alpha_0 \\ 0 \\ \vdots \\ 0
    \end{smallmatrix}\right)
    \]
    Since $A(x) B_0(x) - B(x)A_0(x) \equiv 0 \pmod{p}$, then $\varphi(-B_0,A_0) = S W_0$ is a vector whose coefficients are all divisible by $p$. And likewise
    $A(x) (x^iB_0(x)) - B(x)(x^iA_0(x)) \equiv 0 \pmod{p}$, so $S W_i$ has all its coefficients divisible by $p$, for $i=0,\ldots,\ell-1$.
    
    Denote by $W$ the matrix $(d+e) \times (d+e)$ whose $\ell$ first columns are formed by $W_{\ell-1},W_{\ell-2},\ldots,W_0$, completed by an identity block at the bottom right (zero coefficients are not indicated):
    \[ W = \left(\begin{smallmatrix}
    \beta_{e-\ell}   &               &        &               &   &   &        &        &        &        &        &   \\
    \vdots        & \beta_{e-\ell}   &        &               &   &   &        &        &        &        &        &   \\
    \beta_0       & \vdots        & \ddots &               &   &   &        &        &        &        &        &   \\
                  & \beta_0       &        & \beta_{e-\ell}   &   &   &        &        &        &        &        &   \\
                  &               & \ddots & \vdots        & 1 &   &        &        &        &        &        &   \\
                  &               &        & \beta_0       &   & 1 &        &        &        &        &        &   \\
    -\alpha_{d-\ell} &               &        &               &   &   & \ddots &        &        &        &        &   \\  
    \vdots        & -\alpha_{d-\ell} &        &               &   &   &        & \ddots &        &        &        &   \\
    -\alpha_0     & \vdots        & \ddots &               &   &   &        &        & \ddots &        &        &   \\
                  & -\alpha_0     &        & -\alpha_{d-\ell} &   &   &        &        &        & \ddots &        &   \\
                  &               & \ddots & \vdots        &   &   &        &        &        &        & \ddots &   \\
                  &               &        & -\alpha_0     &   &   &        &        &        &        &        & 1 \\
    \end{smallmatrix}\right)\]
    The first $\ell$ columns of $SW$ are divisible by $p$. So $p^\ell | \det(SW)$. Since we're assuming $B$ and $D$ as monic, then $\beta_{e-\ell}=1$ and therefore $\det(W) = 1$. Hence $p^\ell | \det(S) = \Delta$, so $\deg(D) \le \nu_p(\Delta)$.
\end{proof}

\begin{proof} [Proof of Proposition \ref{prop:valpresone}]
By hypothesis $\nu_p(\Delta) = 1$, so $A(x)$ and $B(x)$ have a common factor modulo $p$, which by Proposition \ref{prop:resmodp} is necessarily of degree $1$. Thus, there exists $n_1 \in \Zz$ such that $A(n_1) \equiv 0 \pmod{p}$ and $B(n_1) \equiv 0 \pmod{p}$.
Thus $p | G(n_1)$. By Corollary \ref{cor:factor}, we know that $p^2$ does not divide $G(n_1)$. 
Let $n_2$ be such that $p | G(n_2)$ then, by Theorem \ref{th:valpresultant} applied with $\omega = 1$, we have $\nu_p(n_2)-\nu(n_1) > 0$, so $n_2 \equiv n_1 \pmod{p}$. Thus, for a pattern of length $p$, the term $p$ can appear here only once: $\pattern_p = [p,1,\ldots,1]_p$ up to permutation.
\end{proof}

%%%%%%%%%%%%%%%%%%%%%%%%%%%%%%%%%%%%%%%%%%%%%%%%%%%%%%%%%%%%%%%%%
\section{Case of a polynomial of degree $1$}
\label{sec:degone}

Let $A(x) = a_1x+a_0 \in \Zz[x]$ a degree $1$ polynomial (not necessarily monic).
First notice that if $A(n) \not\equiv 0 \pmod{p}$ for any $n \in \Zz$, then $A(n) \wedge p = 1$ and it implies $G(n)=1$ for any $n \in \Zz$.
We first provide an explicit formula in the simple case where the degree of $A$ is $1$, see Drouin \cite{drouin}.

\begin{proposition}
    \label{prop:deg1}
    Let $A(x) = a_1x+a_0$ with $a_0 \wedge a_1 = 1$. % and $p \not| a_1$.
    Let $B(x) = x^e + b_{e-1} x^{e-1} + \cdots + b_0$ be a monic polynomial, coprime with $A(x)$.
    Let $\omega = \nu_p(\Delta)$.
    The pattern $\pattern_p$ defined by $A(n) \wedge B(n) \wedge p^{\omega}$
    is the basic pattern $[n \wedge p^\omega]_{p^\omega}$ up to circular permutation.
\end{proposition}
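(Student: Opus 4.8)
The plan is to reduce the whole statement to one clean fact: \emph{modulo the integer $A(n)$, the value $B(n)$ is essentially constant, equal to the resultant $\Delta$}; after that, only a short manipulation of $p$-adic valuations remains. Since $\deg A = 1$, Theorem~\ref{th:resultant} gives the explicit value $\Delta = a_1^e B(-a_0/a_1) = \sum_{k=0}^{e} b_k(-a_0)^k a_1^{e-k}$ (with $b_e = 1$), an integer. Now fix $n \in \Zz$ and reduce modulo the integer $A(n) = a_1 n + a_0$: from $a_1 n \equiv -a_0 \pmod{A(n)}$ we get, monomial by monomial, $a_1^e n^k = a_1^{e-k}(a_1 n)^k \equiv a_1^{e-k}(-a_0)^k$, and summing over $k$,
\[ a_1^e\, B(n) \equiv \Delta \pmod{A(n)}. \]

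Next I would convert this into a formula for $G(n)$. Because $a_0 \wedge a_1 = 1$, we have $(a_1 n + a_0) \wedge a_1 = a_0 \wedge a_1 = 1$, hence $A(n) \wedge a_1^e = 1$, and therefore $G(n) = A(n) \wedge B(n) = A(n) \wedge a_1^e B(n) = A(n) \wedge \Delta$ (the degenerate case $A(n) = 0$ forces $a_1 = \pm 1$, and then $G(n) = |\Delta|$ directly). Consequently, for every prime $p$, writing $\omega = \nu_p(\Delta)$,
\[ \pattern_p(n) \;=\; G(n) \wedge p^\omega \;=\; p^{\,\min(\nu_p(A(n)),\,\omega)}. \]

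It then remains to identify this as the claimed pattern. If $p \mid a_1$, then $p \nmid a_0$, so $A(n) \equiv a_0 \not\equiv 0 \pmod p$ for all $n$, while the term $(-a_0)^e$ shows $p \nmid \Delta$, i.e. $\omega = 0$; thus $\pattern_p = [1]_1 = [n \wedge p^0]_{p^0}$. If $p \nmid a_1$, pick an integer $c$ with $a_1 c \equiv -a_0 \pmod{p^\omega}$ and write $A(n) = a_1(n-c) + (a_1 c + a_0)$; here $\nu_p(a_1) = 0$ and $\nu_p(a_1 c + a_0) \ge \omega$, so a one-line comparison gives $\min(\nu_p(A(n)),\omega) = \min(\nu_p(n-c),\omega)$ for all $n$ (when $\nu_p(n-c) < \omega$ the two summands have distinct valuations, forcing $\nu_p(A(n)) = \nu_p(n-c)$; when $\nu_p(n-c) \ge \omega$, also $\nu_p(A(n)) \ge \omega$). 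Hence $\pattern_p(n) = p^{\min(\nu_p(n-c),\omega)} = (n-c) \wedge p^\omega$, which is exactly the basic pattern $[n \wedge p^\omega]_{p^\omega}$ cyclically shifted by $c$; and since the value $p^\omega$ is attained precisely when $n \equiv c \pmod{p^\omega}$, the minimal period is exactly $p^\omega$, so the pattern indeed has length $p^\omega$.

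The only step that is not routine bookkeeping is the congruence $a_1^e B(n) \equiv \Delta \pmod{A(n)}$ — morally the statement that for linear $A$ the resultant generates the ideal $(A(n),B(n))$ up to the unit $a_1^e$ — together with the care needed to extract the \emph{equality} $G(n) = A(n)\wedge\Delta$ (not merely $G(n)\mid\Delta$) and the separate handling of the degenerate case $p\mid a_1$.
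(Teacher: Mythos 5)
Your proof is correct, and it is organized around an intermediate identity that the paper never states: working modulo the \emph{integer} $A(n)$ you get $a_1^e B(n)\equiv\Delta\pmod{A(n)}$, hence (using $A(n)\wedge a_1=a_0\wedge a_1=1$) the exact formula $G(n)=A(n)\wedge\Delta$, after which everything reduces to computing $\nu_p(A(n))\wedge\omega$. The paper's proof uses the same two ingredients --- the evaluation $\Delta=a_1^e B(-a_0/a_1)$ from Theorem~\ref{th:resultant} and the inverse $\overline{a_1}$ of $a_1$ modulo $p^\omega$ --- but localizes at $p$ immediately: it writes $n=\tilde\alpha+mp^k$ with $\tilde\alpha=-a_0\overline{a_1}$ and bounds $A(n)\wedge p^\omega$ and $B(n)\wedge p^\omega$ separately, the point being that $p^\omega$ divides $B(\tilde\alpha)$. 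Your route buys a cleaner and slightly stronger statement (a single closed formula $G(n)=A(n)\wedge\Delta$ valid for all primes simultaneously, which also explains \emph{why} the pattern is the basic one), and it handles the degenerate case $A(n)=0$ explicitly, which the paper's parametrization $n=\tilde\alpha+mp^k$, $p\nmid m$, quietly skips. The paper's version is marginally more self-contained at each prime and matches the pattern formalism of Section~\ref{sec:resultant} more directly. Your endgame (the case split on $\nu_p(n-c)$ versus $\omega$, and the argument that the minimal period is exactly $p^\omega$ because the value $p^\omega$ occurs only for $n\equiv c\pmod{p^\omega}$) is sound and in fact slightly more careful than the paper's on the minimality of the period.
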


In cases where $a_0 \wedge a_1 \neq 1$ or $B(x)$ is not monic, the result may not always be so simple, see \cite{drouin}.

\begin{proof}%[Proof of proposition \ref{prop:deg1}]
    Let $\alpha = - \frac{a_0}{a_1}$ be the root of $A(x) = a_1x+a_0$.
    By the second equality of Theorem \ref{th:resultant} then
    \[ \Delta 
    = a_1^e B\left( - \frac{a_0}{a_1} \right) \]
    So
    \begin{equation}
    \label{eq:deg1}
    \Delta = (-a_0)^e + b_{e-1}a_1(-a_0)^{e-1} + \cdots + b_ka_1^{e-k}(-a_0)^{k} + \cdots + b_1 a_1^{e-1}(-a_0) + b_0a_1^e
    \end{equation}

    Let $p$ be a prime factor of $\Delta$.
    Let's prove that $p$ does not divide $a_1$: indeed, if $p | a_1$ then by \eqref{eq:deg1} we would have $p | a_0$, which contradicts $a_0 \wedge a_1 = 1$.
    So $a_1$ is invertible modulo $p$ and therefore invertible modulo the powers of $p$.
    Let's denote $\omega = \nu_p(\Delta)$ and $\overline{a_1} \in \Zz$ an inverse of $a_1$ modulo $p^\omega$.
    Set $\tilde \alpha = -a_0 \overline{a_1}$.
    Write an integer $n \in \Zz$ in the form $n = \tilde\alpha + m p^k$ (with $m$ not divisible by $p$).
    On the one hand 
    \[ A(n) 
    = A(\tilde \alpha + mp^k) 
    \equiv a_1(-a_0 \overline{a_1} + mp^k) + a_0
    \equiv mp^k \pmod{p^\omega},
     \]
     so $A(n) \wedge p^\omega = p^k \wedge p^\omega = p^{\min(k,\omega)}$.
     On the other hand $B(n) = B(\tilde\alpha + mp^k) \equiv B(-a_0\overline{a_1}) \pmod{p^k}$.
     By the integer equation \eqref{eq:deg1},
     $a_1^e B\left( -a_0 \overline{a_1} \right)  \equiv \Delta \pmod{p^\omega}$.
     Since $p$ does not divide $a_1$ and $p^\omega$ divides $\Delta$ then $p^\omega$ divides $B\left( -a_0 \overline{a_1} \right)$.
     Thus $B(n) \wedge p^\omega = p^k \wedge p^\omega$.
     Finally $A(n) \wedge B(n) \wedge p^\omega = p^k \wedge p^\omega$, which corresponds exactly to the pattern $(n-\tilde\alpha) \wedge p^\omega$.
\end{proof}

%%%%%%%%%%%%%%%%%%%%%%%%%%%%%%%%%%%%%%%%%%%%%%%%%%%%%%%%%%%%%%%%%
\section{Case of split polynomials with simple roots modulo $p$}
\label{sec:simpleroots}

We provide a direct formula to compute the gcd of $A(n)$ and $B(n)$ in the case of polynomials that are split into distinct linear factors modulo $p$.
Consider a polynomial $A(x)$ with a simple root $\rho$ modulo $p$, i.e.:
\[ A(\rho) \equiv 0 \pmod{p} \qquad \text{ and } \qquad A'(\rho) \not\equiv 0 \pmod{p}\]
Hensel's Lemma allows us to ``uplift'' this root modulo $p^2$, $p^3$,\ldots.

\begin{theorem}[Hensel's Lemma]
For any $\omega > 0$, there exists $r \in \Zz$, such that $r \equiv \rho \pmod{p}$ and
$A(r) \equiv 0 \pmod {p^\omega}$.
\end{theorem}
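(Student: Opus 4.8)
The plan is to induct on $\omega$, refining the root by one power of $p$ at each step. The base case $\omega = 1$ is exactly the hypothesis $A(\rho) \equiv 0 \pmod p$, so I take $r_1 = \rho$. For the inductive step, suppose $r_k \in \Zz$ satisfies $r_k \equiv \rho \pmod p$ and $A(r_k) \equiv 0 \pmod{p^k}$ for some $k \ge 1$; I will look for $r_{k+1}$ of the shape $r_{k+1} = r_k + t p^k$ with $t \in \Zz$. Such an $r_{k+1}$ automatically satisfies $r_{k+1} \equiv r_k \equiv \rho \pmod p$, so the only thing to arrange is $A(r_{k+1}) \equiv 0 \pmod{p^{k+1}}$, and as we will see $t$ only matters modulo $p$.

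The tool that makes this work is the Taylor expansion of a polynomial over $\Zz$. For $A(x) \in \Zz[x]$ one can write
\[ A(x + h) = A(x) + A'(x)\,h + \sum_{j \ge 2} A_j(x)\, h^j, \qquad A_j(x) := \tfrac{1}{j!}A^{(j)}(x), \]
and each $A_j(x)$ again lies in $\Zz[x]$ (the coefficients involve binomial coefficients $\binom{m}{j} \in \Zz$). Plugging in $x = r_k$ and $h = t p^k$, every term with $j \ge 2$ is divisible by $p^{kj}$, hence by $p^{2k}$, and since $2k \ge k+1$ for $k \ge 1$ these terms vanish modulo $p^{k+1}$. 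Therefore
\[ A(r_k + t p^k) \equiv A(r_k) + A'(r_k)\, t\, p^k \pmod{p^{k+1}}. \]
Writing $A(r_k) = c\, p^k$ with $c \in \Zz$ (legitimate because $p^k \mid A(r_k)$), the congruence $A(r_{k+1}) \equiv 0 \pmod{p^{k+1}}$ becomes, after dividing by $p^k$, simply $c + A'(r_k)\, t \equiv 0 \pmod p$.

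This is where the simplicity of the root is used. Since $r_k \equiv \rho \pmod p$ we have $A'(r_k) \equiv A'(\rho) \not\equiv 0 \pmod p$, so $A'(r_k)$ is a unit modulo $p$; choosing $t \equiv -c\, A'(r_k)^{-1} \pmod p$ solves the congruence and yields the desired $r_{k+1}$. Iterating from $r_1 = \rho$ up to $r_\omega$ produces an integer $r = r_\omega$ with $r \equiv \rho \pmod p$ and $A(r) \equiv 0 \pmod{p^\omega}$. I do not expect a real obstacle here; the one point deserving a line of justification is the integrality of the Taylor coefficients $A_j$, which is precisely what guarantees that the ``higher-order terms are divisible by $p^{2k}$'' argument is valid over $\Zz$ and not merely over $\Qq$.
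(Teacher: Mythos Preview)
Your proof is correct and follows exactly the approach the paper sketches: induction on $\omega$ with the Newton-style refinement $r_{k+1}=r_k+tp^k$ obtained from the first-order Taylor expansion and the invertibility of $A'(\rho)$ modulo $p$. The paper only writes out the step from $\omega=1$ to $\omega=2$ and refers to \cite{NZM} for the rest; you have simply filled in the full inductive argument, including the justification that the higher Taylor terms lie in $p^{2k}\Zz$.
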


The idea behind the proof is a variation of Newton's method for root approximation. We refer to \cite[Section 2.6]{NZM} for details. The proof is done by induction on $\omega$, the first step is to write a Taylor expansion around the root:
\[ A(\rho+hp) \equiv A(\rho) + hpA'(\rho) \pmod{p^2}. \]
Denoting $\overline{A'(\rho)} \in \Zz$ an inverse of $A'(\rho)$ modulo $p$ and setting
\[ h_0  = - \frac{A(\rho)}{p} \overline{A'(\rho)} \]
which make sense since $p$ divides $A(\rho)$,
then $A(\rho+h_0p) \equiv 0 \pmod{p^2}$. Thus $r = \rho + h_0p$ is a root modulo $p^2$. 

\medskip

Consider a monic polynomial $A(x)$ that is split and has simple roots modulo $p$, i.e.{} $A(x) \equiv (x-\rho_1)(x-\rho_2)\cdots(x-\rho_d) \pmod{p}$
where $\rho_i$ are pairwise distinct modulo $p$.
Then a variant of Hensel's Lemma allows us to factor $A(x)$ modulo any power of $p$. There exists $r_1,\ldots,r_d \in \Zz$ such that $r_i \equiv \rho_i \pmod{p}$ (and therefore $r_i \not\equiv r_j \pmod{p}$) with:
\[ A(x) \equiv (x-r_1)(x-r_2)\cdots(x-r_d) \pmod{p^\omega}. \]

Let $B(x)$ be another split monic polynomial with simple roots modulo $p$, $B(x) \equiv (x-\sigma_1)(x-\sigma_2)\cdots(x-\sigma_e) \pmod{p}$ and its factorization modulo $p^\omega$, $B(x) \equiv (x-s_1)(x-s_2)\cdots(x-s_e) \pmod{p^\omega}$.
Here's a direct formula for computing the gcd of the values.
\begin{theorem}
    \label{th:simpleroots}
    Let $A(x), B(x) \in \Zz[x]$ be monic coprime polynomials, such that both $A(x)$ and $B(x)$ are split and have simple roots modulo $p$.
    Let $p^\omega$ be the largest possible factor among all the $A(n) \wedge B(n)$.
    Let $n \in \Zz$. If there are $1 \le i \le d$ and $1 \le j \le e$ such that $n \equiv r_i \equiv s_j \pmod{p}$ then 
    \[ A(n) \wedge B(n) \wedge p^\omega = (n-r_i) \wedge (r_i - s_j) \wedge p^\omega. \]
    Otherwise $A(n) \wedge B(n) \wedge p^\omega = 1$.
\end{theorem}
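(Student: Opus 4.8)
The plan is to factor both $A(x)$ and $B(x)$ completely modulo $p^\omega$ using the Hensel factorizations recalled above, $A(x)\equiv\prod_{k=1}^{d}(x-r_k)\pmod{p^\omega}$ and $B(x)\equiv\prod_{l=1}^{e}(x-s_l)\pmod{p^\omega}$, and then read off $\nu_p(A(n))$ and $\nu_p(B(n))$ term by term. First I would fix $n\in\Zz$ and observe that since the $r_k$ are pairwise distinct modulo $p$, at most one factor $n-r_k$ is divisible by $p$; if $n\not\equiv r_k\pmod p$ for every $k$, then $p\nmid A(n)$ and already $A(n)\wedge p^\omega=1$, forcing $A(n)\wedge B(n)\wedge p^\omega=1$. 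The same holds for $B$. So the interesting case is exactly when there exist (necessarily unique) indices $i,j$ with $n\equiv r_i\pmod p$ and $n\equiv s_j\pmod p$; then $\nu_p(A(n))=\min(\nu_p(n-r_i),\omega)$ and $\nu_p(B(n))=\min(\nu_p(n-s_j),\omega)$, because all the other factors are $p$-adic units. Here I must be slightly careful that the congruence $A(n)\equiv\prod(n-r_k)\pmod{p^\omega}$ is only modulo $p^\omega$, not an equality — but since $\nu_p(A(n))$ is determined once we know $A(n)\bmod p^\omega$ provided that valuation is $<\omega$, and since it is $\ge\omega$ precisely when the product is $\equiv 0$, the formula $A(n)\wedge p^\omega=p^{\min(\nu_p(n-r_i),\omega)}$ is correct in all cases.

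Next I would combine the two: $A(n)\wedge B(n)\wedge p^\omega=p^{\min(\nu_p(n-r_i),\,\nu_p(n-s_j),\,\omega)}$, and the point is to recognize $\min(\nu_p(n-r_i),\nu_p(n-s_j))$ as $\nu_p\big((n-r_i)\wedge(r_i-s_j)\big)$. This is the ultrametric inequality: writing $n-s_j=(n-r_i)+(r_i-s_j)$, we have $\nu_p(n-s_j)\ge\min(\nu_p(n-r_i),\nu_p(r_i-s_j))$ with equality unless the two valuations on the right coincide, and in the degenerate equal case one checks directly that $\min(\nu_p(n-r_i),\nu_p(n-s_j))$ still equals $\min(\nu_p(n-r_i),\nu_p(r_i-s_j))$. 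Hence $\min(\nu_p(n-r_i),\nu_p(n-s_j))=\min(\nu_p(n-r_i),\nu_p(r_i-s_j))$ unconditionally, which is exactly $\nu_p\big((n-r_i)\wedge(r_i-s_j)\big)$, and intersecting with $p^\omega$ gives the claimed $(n-r_i)\wedge(r_i-s_j)\wedge p^\omega$.

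The one genuinely substantive point — and what I expect to be the main obstacle — is to justify that $p^\omega$, defined as the largest power of $p$ that occurs among all values $A(n)\wedge B(n)$, is large enough that the Hensel factorization modulo $p^\omega$ captures everything, i.e.\ that no information is lost by truncating. Concretely one needs: if $\nu_p(n-r_i)\ge\omega$ and $\nu_p(r_i-s_j)\ge\omega$ simultaneously occurred for some $n$, then $A(n)\wedge B(n)$ would be divisible by $p^\omega$, consistent with the definition; and conversely the maximum is actually attained, so the formula never "saturates" incorrectly. This is where the hypothesis that $p^\omega$ is the true maximum (rather than an arbitrary truncation level) is used: it guarantees that $\omega\le\nu_p(\Delta)$ via Corollary \ref{cor:factor}, and more importantly that $\min(\nu_p(n-r_i),\nu_p(r_i-s_j),\omega)$ with the outer $\wedge p^\omega$ is the honest answer. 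I would also note, as a sanity check matching Theorem \ref{th:valpresultant}, that $\nu_p(r_i-s_j)$ contributes to $\nu_p(\Delta)$ through the product formula $\Delta=\pm\prod_{k,l}(r_k-s_l)$ modulo $p^\omega$, so the bound $\omega\le\nu_p(\Delta)$ is automatic and the formula is internally consistent. Once this is in place, the proof is just the two short valuation computations above.
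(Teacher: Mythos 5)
Your proof is correct and follows essentially the same route as the paper: factor $A$ and $B$ modulo $p^\omega$ via Hensel, observe that only the factors $n-r_i$ and $n-s_j$ are non-units modulo $p$, and pass from $(n-r_i)\wedge(n-s_j)$ to $(n-r_i)\wedge(r_i-s_j)$ --- the paper simply invokes $\gcd(a,b)=\gcd(a,b-a)$ where you use the ultrametric inequality, which is the same thing. The ``genuinely substantive point'' you flag in your last paragraph is in fact a non-issue: a congruence $M\equiv N\pmod{p^\omega}$ already forces $M\wedge p^\omega=N\wedge p^\omega$, so the term-by-term valuation computation is exact for any truncation level $\omega$, and the specific choice of $\omega$ only serves to guarantee that $\wedge\, p^\omega$ captures the full power of $p$ in $A(n)\wedge B(n)$.
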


\begin{proof}
    Let's fix $n \in \Zz$.
    For $A(n) \wedge B(n) \wedge p^\omega$ to be different from $1$ we need $A(n) \equiv 0 \pmod p$ and $B(n) \equiv 0 \pmod{p}$, so there exists $1 \le i_0 \le d$ such that $n \equiv r_{i_0} \pmod{p}$ and there exists $1 \le j_0 \le e$ such that $n \equiv s_{j_0} \pmod p$. Moreover, such $i_0$ and $j_0$ are unique because $r_i$ are pairwise distinct modulo $p$ and so are the $s_j$. 
    In other words, in the product $A(n) \equiv (n-r_1)(n-r_2)\cdots(n-r_d) \pmod{p^\omega}$ only the term $n-r_{i_0}$ is divisible by $p$ and in the product $B(n) \equiv (n-s_1)(n-s_2)\cdots(n-s_e) \pmod{p^\omega}$ only the term $n-s_{j_0}$ is divisible by $p$. Thus
    $A(n) \wedge B(n) \wedge p^\omega = (n-r_{i_0}) \wedge (n-s_{j_0}) \wedge p^\omega$, as $\gcd(a,b) = \gcd(a,b-a)$ then we also have $A(n) \wedge B(n) \wedge p^\omega = (n-r_{i_0}) \wedge (r_{i_0}-s_{j_0}) \wedge p^\omega$.
\end{proof}

%%%%%%%%%%%%%%%%%%%%%%%%%%%%%%%%%%%%%%%%%%%%%%%%%%%%%%%%%%%%%%%%%
\section{Better than the resultant?}
\label{sec:delta}

The resultant is not always the smallest integer that satisfies a Bézout identity. For example, with $A(x) = x^2+4$ and $B(x) = x^2-4$, the resultant is $\Delta=64$, but a smaller integer is obtained by Bézout's identity $A(x) \times 1 + B(x) \times (-1) = 8$.
We will denote by $\delta$ the smallest positive integer such that there exists $U(x),V(x) \in \Zz[x]$ with $A(x)U(x) + B(x)V(x) = \delta$.
As before, if $d | A(n)$ and $d | B(n)$ then $d | \delta$.
So, for any $n \in \Zz$, $G(n) = A(n) \wedge B(n)$ divides $\delta$.
Since the resultant also verifies such a Bézout identity (see Formula \eqref{eq:bezout}) then $\delta | \Delta$.

Here's a link between $\delta$ and the existence of common roots of $A(x)$ and $B(x)$ modulo powers of $p$. 
\begin{proposition}
    \label{prop:delta}
    Let $A(x), B(x) \in \Zz[x]$ be monic polynomials, coprime (in $\Qq[x]$).
    Assume that $A(x)$ and $B(x)$ are split and have simple roots modulo $p$.
    Then $\nu_p(\delta)$ is the largest integer $\mu$ such that there exists $n \in \Zz$ with $A(n) \equiv 0 \pmod{p^\mu}$ and $B(n) \equiv 0 \pmod{p^\mu}$.
    In particular, $\nu_p(\delta)$ is the largest exponent appearing in the pattern $\pattern_p$, which has length $p^{\nu_p(\delta)}$.
\end{proposition}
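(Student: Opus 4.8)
The plan is to establish the two inequalities $\nu_p(\delta) \le \mu_0$ and $\nu_p(\delta) \ge \mu_0$, where $\mu_0$ denotes the largest integer $\mu$ for which some $n \in \Zz$ satisfies $A(n) \equiv B(n) \equiv 0 \pmod{p^\mu}$. I would first pass to the ring of $p$-adic integers $\Zz_p$: since $A$ and $B$ are monic with simple roots modulo $p$, Hensel's Lemma factors $A(x) = \prod_{i=1}^d (x-\alpha_i)$ and $B(x) = \prod_{j=1}^e (x-\beta_j)$ over $\Zz_p$, with the $\alpha_i$ pairwise incongruent modulo $p$, likewise the $\beta_j$, and $\alpha_i \ne \beta_j$ for all $i,j$ by coprimality in $\Qq[x]$. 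For each residue $c$ that is a common root of $A$ and $B$ modulo $p$, exactly one $\alpha_{i(c)}$ and one $\beta_{j(c)}$ reduce to it; set $\mu_c = \nu_p(\alpha_{i(c)} - \beta_{j(c)})$. Inspecting which linear factors of $A(n)$ and $B(n)$ are divisible by $p$ when $n$ is congruent to a common root shows that $\mu_0 = \max_c \mu_c$, and that for $c_0$ attaining this maximum any integer $n \equiv \alpha_{i(c_0)} \pmod{p^{\mu_0}}$ has $p^{\mu_0} \mid A(n)$ and $p^{\mu_0} \mid B(n)$. Since $G(n) \mid \delta$ for every $n$, this already yields $\nu_p(\delta) \ge \mu_0$.

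For the reverse inequality the crucial step is to show $p^{\mu_0} \in (A,B)$ inside $\Zz_p[x]$. The linear factors $x-\alpha_i$ are pairwise comaximal in $\Zz_p[x]$ (their differences are units), so by the Chinese Remainder Theorem $\Zz_p[x]/(A) \cong \prod_{i=1}^d \Zz_p$ via evaluation at the $\alpha_i$; reducing further modulo $B$ gives $\Zz_p[x]/(A,B) \cong \prod_{i=1}^d (\Zz_p/B(\alpha_i)\Zz_p)$. Here $\nu_p(B(\alpha_i))$ equals $\mu_c$ when $\alpha_i \equiv c$ is a common root and $0$ otherwise (in which case that factor is the zero ring), so $\Zz_p[x]/(A,B) \cong \prod_c \Zz/p^{\mu_c}\Zz$ with $c$ running over the common roots modulo $p$. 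Under this isomorphism the class of the integer $p^{\mu_0}$ is $(p^{\mu_0} \bmod p^{\mu_c})_c = 0$, because every $\mu_c \le \mu_0$; hence there exist $\Phi, \Psi \in \Zz_p[x]$ with $A\Phi + B\Psi = p^{\mu_0}$.

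It then remains to descend this $p$-adic Bézout identity to $\Zz$. Put $N = \nu_p(\Delta)$ and choose $\Phi_N, \Psi_N \in \Zz[x]$ whose coefficients are congruent modulo $p^N$ to those of $\Phi, \Psi$; then $A\Phi_N + B\Psi_N = p^{\mu_0} - p^N g(x)$ for some $g \in \Zz[x]$. With $M = \Delta/p^N$ (an integer prime to $p$), so that $Mp^N = \Delta$, and a resultant Bézout identity $AU_1 + BV_1 = \Delta$ from Proposition \ref{prop:bezout}, a short manipulation gives $Mp^{\mu_0} = A(M\Phi_N + U_1 g) + B(M\Psi_N + V_1 g) \in (A,B)$ in $\Zz[x]$. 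Being a constant, $Mp^{\mu_0}$ lies in $(A,B) \cap \Zz = \delta\Zz$, so $\delta \mid Mp^{\mu_0}$, and since $p \nmid M$ this gives $\nu_p(\delta) \le \mu_0$. Combined with the first part, $\nu_p(\delta) = \mu_0$. The "in particular" clause then follows from the explicit formula of Theorem \ref{th:simpleroots}: the exponents occurring in $\pattern_p$ are exactly the numbers $\nu_p(G(n))$, whose maximum is $\mu_0$, while the sequence $n \mapsto G(n)\wedge p^\infty$ is determined by — and genuinely varies with — $n \bmod p^{\mu_0}$, so $\pattern_p$ has length $p^{\mu_0}$.

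The main obstacle I expect is the computation $\Zz_p[x]/(A,B) \cong \prod_c \Zz/p^{\mu_c}\Zz$: this is what pins down that the ideal $(A,B)$ meets $\Zz_p$ in $p^{\mu_0}\Zz_p$ rather than merely in $p^{\nu_p(\Delta)}\Zz_p$, i.e. that at the prime $p$ the integer $\delta$ can be strictly smaller than the resultant in exactly the predicted way. The descent to $\Zz$ is bookkeeping, and the lower bound is elementary; the one point needing care is that both hypotheses are genuinely used — monicity so that $A,B$ split over $\Zz_p$ with resultant $\prod_{i,j}(\alpha_i-\beta_j)$, and simplicity of the roots modulo $p$ so that the lifted roots stay pairwise distinct modulo $p$ and the linear factors remain comaximal.
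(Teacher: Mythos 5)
Your proof is correct, but the heart of it --- the upper bound $\nu_p(\delta)\le\mu_0$ --- goes by a genuinely different route than the paper's. The paper argues by contradiction: assuming $\nu_p(\delta)>\mu$, it evaluates a B\'ezout identity $AU+BV=\delta$ with $\deg V<\deg A$ at the $d$ Hensel lifts $r_i$ of the roots of $A$ modulo $p^{\mu+1}$, deduces $p\mid V(r_i)$ for each $i$ because $B(r_i)\not\equiv 0\pmod{p^{\mu+1}}$, concludes $V\equiv 0\pmod p$ by counting roots of a polynomial of degree $<d$ over $\Zz/p\Zz$, and reaches a degree contradiction in $AU\equiv\delta\pmod p$. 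You instead work constructively in $\Zz_p[x]$: the Chinese Remainder computation $\Zz_p[x]/(A,B)\cong\prod_c \Zz/p^{\mu_c}\Zz$ shows $p^{\mu_0}\in(A,B)$ over $\Zz_p$, and your descent (truncating the cofactors modulo $p^{N}$ and absorbing the error term with a resultant identity $AU_1+BV_1=\Delta$) lands $Mp^{\mu_0}$ in $(A,B)\cap\Zz=\delta\Zz$ with $p\nmid M$; the details all check out. Your approach costs more machinery ($p$-adic integers, comaximality of the lifted linear factors) but buys more: it identifies the full local structure of $\Zz[x]/(A,B)$ at $p$, from which Theorem \ref{th:simpleroots} and the exact discrepancy between $\delta$ and $\Delta$ at $p$ drop out simultaneously, whereas the paper's argument is more elementary and self-contained. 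Two small points you should make explicit: the identity $(A,B)\cap\Zz=\delta\Zz$ (the integers of the form $AU+BV$ form an ideal of $\Zz$, so its minimal positive element generates it), and the final claim that the minimal period is exactly $p^{\mu_0}$ rather than a proper divisor of it --- your ``genuinely varies'' needs the one-line witness that, for the pair attaining $\nu_p(r_i-s_j)=\mu_0$, the formula of Theorem \ref{th:simpleroots} gives the value $p^{\mu_0}$ at $n\equiv r_i$ but $p^{\mu_0-1}$ at $n\equiv r_i+p^{\mu_0-1}$, which is exactly how the paper finishes.
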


\begin{example}
    \label{ex:delta}
    Let $A(x) = x^2 - 9x + 16$ and $B(x) = x^2 - 7x + 12$.
    The resultant is $\Delta = 8$. Modulo $p=2$, $A(x) \equiv x(x-1)$ and $B(x) \equiv x(x-1)$ are split with simple roots.
    The common roots modulo $2$, are $0$ and $1$.
    Modulo $4$, the only common root is $n_0=0$: $A(0) \equiv 0 \pmod{4}$ and $B(0) \equiv 0 \pmod{4}$. Modulo $8$, $A(x)$ and $B(x)$ no longer have common roots.
    So Proposition \ref{prop:delta} gives us $\delta = 4$.
\end{example}

This result would no longer be valid if $A(x)$ or $B(x)$ had multiple factors.
There is a generalization by Taixés--Wiese  \cite[Corollary 2.12 (c)]{TW} in which the split hypothesis is no longer necessary, but there must be no multiple factors.
Another way of computing $\delta$ is due to Ayad \cite[Exercise 2.13]{ayad}
which we explain briefly: let $U(x), V(x) \in \Zz[x]$ be Bézout coefficients provided by the extended Euclidean algorithm such that $A(x)U(v) + B(x) V(x) = \Delta$. Let $c(U)$ be the \defi{content} of $U$, i.e.{} the gcd of the coefficients of $U(x)$, and $c(V)$ the content of $V(x)$. Then $\delta = \frac{\Delta}{\gcd(c(U),c(V))}$. Thus we obtain a Bézout identity for $\delta$ by starting from a Bézout identity for $\Delta$ and dividing by the gcd of all the coefficients of $U$ and $V$. 
For example, with the polynomials $A(x)$ and $B(x)$ of Example \ref{ex:delta}, with $U(x) = 2x-10$ and $V(x) = -2x+14$ we obtain Bézout's identity $A(x)U(x)+B(x)V(x) = 8$ which gives the resultant, but as the coefficients of $U(x)$ and $V(x)$ are all divisible by $2$, we easily obtain a Bézout identity giving $\delta=4$.

\begin{proof}[Proof of Proposition \ref{prop:delta}]
Let $\mu$ be the largest integer such that $A(n)$ and $B(n)$ have a common root modulo $p^\mu$. There therefore exists $n_0 \in \Zz$ such that $A(n_0) \equiv 0 \pmod{p^\mu}$ and $B(n_0) \equiv 0 \pmod{p^\mu}$. Bézout's identity $A(x)U(x)+B(x)V(x) = \delta$ applied to $x=n_0$ proves that $\delta \equiv 0 \pmod{p^\mu}$ and therefore $\nu_p(\delta) \ge \mu$.

\medskip

By contradiction,  assume that $\nu_p(\delta) > \mu$.
As in Section \ref{sec:simpleroots}, write $A(x) \equiv (x-\rho_1)(x-\rho_2)\cdots(x-\rho_d) \pmod{p}$ where the $\rho_i$ are pairwise distinct modulo $p$. This factorization is lifted by Hensel's Lemma modulo $p^{\mu+1}$ to $A(x) \equiv (x-r_1)(x-r_2)\cdots(x-r_d) \pmod{p^{\mu+1}}$ with $r_i \equiv \rho_i \pmod{p}$. The same applies to $B(x) \equiv (x-\sigma_1)(x-\sigma_2)\cdots(x-\sigma_e) \pmod{p}$ and its factorization modulo $p^{\mu+1}$, $B(x) \equiv (x-s_1)(x-s_2)\cdots(x-s_e) \pmod{p^{\mu+1}}$.

Bézout's identity on $\Zz$ is written $A(x)U(x)+B(x)V(x)=\delta$ where we take care to choose $\deg(U) < \deg(B)$ and $\deg(V) < \deg(A)$.
We evaluate this identity at $x=r_i$, as $A(r_i) \equiv 0 \pmod{p^{\mu+1}}$ and $\delta \equiv 0 \pmod{p^{\mu+1}}$ (because $\nu_p(\delta) > \mu$) then $B(r_i)V(r_i) \equiv 0 \pmod{p^{\mu+1}}$. But by definition of $\mu$, $A(x)$ and $B(x)$ have no common roots modulo $p^{\mu+1}$, so $B(r_i) \not\equiv 0 \pmod{p^{\mu+1}}$ and so $V(r_i) \equiv 0 \pmod{p}$ (in other words $p^{\mu+1}$ divides $B(r_i)V(r_i)$ but not $B(r_i)$ so $p$ divides $V(r_i)$).
This is true for each root $r_i$ of $A$, $i=1,\ldots,d$ and as $r_i \equiv \rho_i \pmod{p}$, then 
$V(\rho_i) \equiv 0 \pmod{p}$, $i=1,\ldots,d$.
We found $d$ roots to the polynomial $V(x)$ of degree $< d$ in the UFD ring $\Zz/p\Zz[x]$, so $V(x)$ is the zero polynomial modulo $p$. Bézout's identity modulo $p$ becomes 
$A(x)U(x) \equiv \delta \pmod{p}$, which is impossible for reasons of degree in $\Zz/p\Zz[x]$.   

\medskip

Let $\mu = \nu_p(\delta)$ be the largest integer such that $A(n)$ and $B(n)$ have a common root modulo $p^\mu$. For this common root $n_0$, we have $A(n_0) \wedge B(n_0) \wedge p^\infty = p^\mu$, and since for any $n$, $A(n) \wedge B(n)$ divides $\delta$, then $p^\mu$ is indeed the largest element of the pattern $\pattern_p$.

We now need to prove that the length of the pattern is $p^\mu$.
First of all, by Lemma \ref{lem:periodic} we know that this length divides $p^{\mu}$. 
To simplify the end of the proof, we assume that $n_0 = 0$, i.e. $A(x) \equiv x (x-r_2)\cdots(x-r_d) \pmod{p^\mu}$ with $r_1=0$ and $r_i \not\equiv r_j \pmod{p}$ (if $i \neq j$) and $B(x) \equiv x(x-s_2)\cdots(x-s_e) \pmod{p^\mu}$ with $s_1=0$ and $s_i \not\equiv s_j \pmod{p}$ (if $i \neq j$).
Then for $k=1,\ldots,\mu-1$, $A(p^k) \wedge B(p^k) \wedge p^\infty = p^k \wedge p^\mu = p^k$ which means that the pattern must be longer than or equal to $p^\mu$.  
\end{proof}

\begin{exercise}
	\label{ex:exo}
	Let \( A(x) = x^a + 1 \) and \( B(x) = x^b + 1 \). The goal of the exercise is to show that when \( A(x) \) and \( B(x) \) are coprime polynomials, the sequence \( \big( \gcd(A(n), B(n)) \big)_{n \in \mathbb{Z}} \) is a periodic sequence with pattern \([1, 2]\).	
	\begin{enumerate}
		\item What is the remainder of the Euclidean division of \( x^a + 1 \) by \( x + 1 \) (in \(\mathbb{Z}[x]\))? \emph{Discuss according to the parity of \(a\)}.
		
		\item Show that \( \gcd(x^a - 1, x^b - 1) = x^d - 1 \) where \( d = \gcd(a, b) \). \emph{Hint: link an elementary step of the Euclidean algorithm on polynomials to one step on the integers.}
		
		\item In this question, assume that \(a\) and \(b\) are coprime.
		\begin{enumerate}
			\item What is \( \gcd(x^{2a} - 1, x^{2b} - 1) \)?
			
			\item Using Bézout's identity, show that \( x + 1 \) belongs to the ideal \( \langle A(x), B(x) \rangle \), i.e., there exist \( U(x), V(x) \in \mathbb{Z}[x] \) such that \( x + 1 = (x^a + 1)U(x) + (x^b + 1)V(x) \).
			
			\item Show that if \(a\) and \(b\) are odd, the gcd of \( A(x) \) and \( B(x) \) is \( x + 1 \).
			
			\item Show that if \(a\) or \(b\) is even, \( A(x) \) and \( B(x) \) are coprime and that \( 2 \) belongs to the ideal \( \langle A(x), B(x) \rangle \).
			
			\item Deduce that if \( A(x) \) and \( B(x) \) are coprime, then \( \gcd(A(n), B(n)) \) is \( 1 \) or \( 2 \). \emph{Hint: see the beginning of Section \ref{sec:delta}.}
			
			\item Conclude for the case where \( a \) and \( b \) are coprime.
			
		\end{enumerate}
		
		\item Now, do not assume \(a\) and \(b\) are coprime. Deduce from the previous question that if \( A(x) \) and \( B(x) \) are coprime polynomials, then the sequence \( \gcd(A(n), B(n)) \) is a periodic sequence with pattern \([1, 2]\).
		
	\end{enumerate}
\end{exercise}

%%%%%%%%%%%%%%%%%%%%%%%%%%%%%%%%%%%%%%%%%%%%%%%%%%%%%%%%%%%%%%%%%
\section*{Perspective}

Let's conclude with examples of polynomials having multiple roots after reduction modulo \( p \), and therefore for which Hensel's lemma no longer applies, Proposition \ref{prop:delta} is no longer valid, and the role of \( \delta \) cannot be as direct as in this proposition.
Let \( A(x) = x^2 + 27 \) and \( B(x) = x^2 - 18x + 108 \). These are two coprime polynomials with \( \Delta = 3^7 \times 7 \) and \( \delta = 3^5 \times 7 \).
The sequence of terms \( A(n) \wedge B(n) \wedge 3^{\infty} \) is periodic with pattern \( [27,1,1,9,1,1,9,1,1] \) of length $9$.
Contrary to what happens in Proposition \ref{prop:delta}, the power of 3 appearing in \( \delta \), namely \( 3^5 \), is greater than the length 9 of the pattern or its greatest value \( 27 \).

To broaden the perspective, polynomials over the ring \( \Zz/n\Zz \) sometimes exhibit surprising behavior.
For example, \( A(x) = (x+1)(x+7) \) and \( B(x) = (x+3)(x+5) \) are coprime polynomials (with \( \Delta = 64 \) and \( \delta = 8 \)). The sequence of terms \( A(n) \wedge B(n) \) is periodic with pattern \( [1,8] \).
The polynomials \( A \) and \( B \) are equal modulo $2$, with $1$ as a double root modulo $2$; they are also equal modulo $4$.
More surprisingly, since \( (x+1)(x+7) \equiv (x+3)(x+5) \pmod{8} \), the polynomials \( A \) and \( B \) are equal modulo $8$. This is possible because \( \Zz/8\Zz[x] \) is not a factorial ring.
There is still much to discover!

\bigskip

\emph{Acknowledgements.} We thank the referees for their comments and especially for suggesting Exercise \ref{ex:exo}.

%%%%%%%%%%%%%%%%%%%%%%%%%%%%%%%%%%%%%%%%%%%%%%%%%%%%%%%%%%%%%%%%%
%%%%%%%%%%%%%%%%%%%%%%%%%%%%%%%%%%%%%%%%%%%%%%%%%%%%%%%%%%%%%%%%%

\bibliographystyle{plain}
\bibliography{xgcd.bib}

\end{document}